\newcommand{\spa}{\mathsf{span}}
\newcommand{\ndN}{\mathbb{N}}
\newcommand{\ndR}{\mathbb{R}}
\newcommand{\ndK}{\mathbb{K}}
\newcommand{\Di}{\textnormal{D}}
\renewcommand{\Pr}[1]{\mathbb{P}(#1)}
\newcommand{\Ex}[1]{\mathbb{E}[#1]}
\newcommand{\convdis}{\,{\buildrel d \over \longrightarrow}\,}
\newcommand{\convp}{\,{\buildrel p \over \longrightarrow}\,}
\newcommand{\He}{\mathrm{He}}
\newcommand{\cA}{\mathcal{A}}
\newcommand{\cB}{\mathcal{B}}
\newcommand{\cC}{\mathcal{C}}
\newcommand{\cE}{\mathcal{E}}
\newcommand{\cK}{\mathcal{K}}
\newcommand{\cL}{\mathcal{L}}
\newcommand{\cN}{\mathcal{N}}
\newcommand{\cR}{\mathcal{R}}
\newcommand{\cS}{\mathcal{S}}
\newcommand{\cT}{\mathcal{T}}
\newcommand{\cU}{\mathcal{U}}
\newcommand{\cV}{\mathcal{V}}
\newcommand{\cX}{\mathcal{X}}
\newcommand{\Cyc}{\textsc{CYC}}
\newcommand{\scX}{\mathscr{X}}
\newcommand{\scU}{\mathscr{U}}
\newcommand{\scA}{\mathscr{A}}
\newcommand{\scB}{\mathscr{B}}
\newcommand{\scC}{\mathscr{C}}
\newcommand{\scR}{\mathscr{R}}
\newcommand{\scT}{\mathscr{T}}
\newcommand{\mK}{\mathsf{K}}
\newcommand{\mS}{\mathsf{S}}
\newcommand{\mU}{\mathsf{U}}
\newtheorem{theorem}{Theorem}[section]
\newtheorem{corollary}[theorem]{Corollary}
\newtheorem{proposition}[theorem]{Proposition}
\newtheorem{lemma}[theorem]{Lemma}
\newtheorem{definition}[theorem]{Definition}
\newtheorem{remark}[theorem]{Remark}
\newtheorem{example}[theorem]{Example}
\numberwithin{equation}{section}
\keywords{random trees, unrooted plane trees}
\title{\textbf{Simply generated unrooted plane trees}}
\author{Leon Ramzews}
\thanks{The second author is supported by the Swiss National Science Foundation grant number 200020\_172515.}
\address{University of Munich}
\email{Leon.Ramzews@math.lmu.de}
\author{Benedikt Stufler}
\address{University of Zurich}
\email{Benedikt.Stufler@math.uzh.ch}
\begin{document}

\begin{abstract}
	We study random unrooted plane trees with $n$ vertices sampled according to the weights corresponding to the vertex-degrees. Our main result shows that if the generating series of the weights has positive radius of convergence, then this model of random trees may be approximated geometrically by a Galton--Watson tree conditioned on having a large random size. 
	This implies that a variety of results for the well-studied planted case also hold for unrooted trees, including Gromov--Hausdorff--Prokhorov scaling limits, tail-bounds for the diameter, distributional graph limits, and limits for the maximum degree. Our work complements results by Wang~(2016), who studied random unrooted plane trees whose diameter tends to infinity.
\end{abstract}

\maketitle

\section{Introduction}
\label{sec:intro}
The classical model of trees studied in probability theory is that of a \emph{planted plane tree}. Such a tree has a distinguished vertex called the root, and each vertex may have a linearly ordered list of children. Surely the most prominent example for a model of random plane trees is that of a Galton--Watson tree. Of particular interest is the asymptotic behaviour when conditioning this model on producing a tree with a specific number of vertices, leaves, or more generally vertices with outdegree in a fixed set, or on having a specific height, and letting this parameter tend to infinity \cite{MR3227065,MR3164755,MR1207226, MR2908619}. In a certain sense, these models may be termed \emph{simply generated}, as they fall under the more general setting of fixing a weight-sequence, assigning to each tree the product of weights corresponding to its vertex out-degrees, and sampling a tree with probability proportional to this weight from some set of plane trees. 

Apart from planted plane trees, many other types of combinatorial trees have been studied from both an enumerative and probabilistic viewpoint. Such trees may be rooted or unrooted, ordered or unordered, and labelled or unlabelled.  Drmota's book \cite{MR2484382} gives an extensive account on the subject and since then further additions to the field have been made~\cite{MR2829313,MR3050512, MR3773800,SRSAtoappear,StEJC2018,MR3573447}.


\begin{figure}[ht]
	\centering
		\centering
		\includegraphics[width=0.5\textwidth]{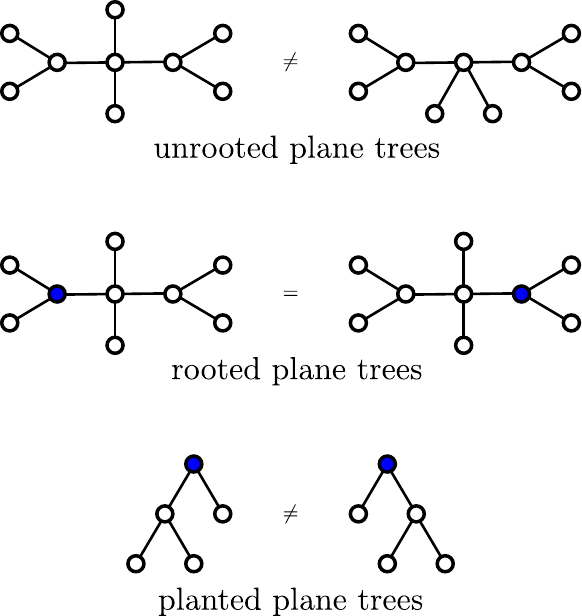}
		\caption{Different types of plane trees.}
		\label{fi:trees}
\end{figure}  

The present paper is concerned with simply generated unrooted plane trees as their number of vertices becomes large.  In the language of planar maps, whose study has received considerable attention in recent literature~\cite{MR3025391}, an \emph{unrooted plane tree} is an (unrooted) planar map with only a single face. In combinatorial terms, we may describe it as an unlabelled tree where each vertex is endowed with a cyclic ordering of its neighbourhood. By distinguishing a vertex, we may form a \emph{rooted plane tree}. Note that rooted plane trees differ from planted plane trees. In rooted plane trees, any vertex is endowed with a cyclic ordering of its neighbourhood, but in a planted plane tree it is a linear ordering on its offspring set. Figure~\ref{fi:trees} illustrates these subtle differences.

Our main result shows under minimal assumptions on the weights how we may geometrically approximate simply generated unrooted and rooted plane trees by the well studied model of simply generated planted plane trees. In fact, the only requirement we are going to make is that the generating series of the weights has positive radius of convergence. This level of generality is also one of the main difficulties in the proof. We comment on the non-analytic case in Remark~\ref{re:superexponential} below.

The approximation is very accurate, so that \emph{practically all} results available for Galton--Watson trees conditioned on having a fixed size carry over to simply generated unrooted plane trees.   This includes a variety of graph limits such as Gromov--Hausdorff--Prokhorov scaling limits and Benjamini-Schramm limits, but also tail bounds for the diameter, a central limit theorem for the maximum degree, and results for other graph theoretic parameters.

A model of random unrooted plane trees  has previously been studied by Wang~\cite{MR3573447}, who established powerful limit theorems as the diameter of these trees tends to infinity. There are some similarities in the approach of the present work  and that of \cite{MR3573447}, although the focus in the latter is on trees with a fixed diameter. Uniform random unlabelled unrooted unordered trees were studied in~\cite{SRSAtoappear} via the cycle-pointing technique, which we are not going to use in the present work. It could be applied to uniform random unrooted plane trees, but we aim for a much higher level of generality.

\section*{Outline of the paper}

In Section~\ref{sec:mainresults} we present our main theorem. In Section~\ref{sec:strategy} we describe the proof strategy and state our main lemmas. In Section~\ref{sec:applications} we state some applications of the main theorem. In particular, Subsection~\ref{sec:scaling} provides scaling limits and  tail-bounds for the diameter.  Subsection~\ref{sec:local} establishes a Benjamini-Schramm limit and a central limit theorem for the degree of a random vertex. Subsection~\ref{sec:vdegree} discusses a central limit theorem for the maximum degree and applications to other graph parameters. In Section~\ref{sec:preliminaries} we recall the required combinatorial background to prepare for the proof of the geometric approximation. In Section~\ref{sec:proof} we present the proof of our main result. The proofs of our applications are collected in Section~\ref{sec:profapp}.

\section*{Notation}
We let $\ndN$ denote the set of positive integers and set $\ndN_0 = \ndN \cup \{0\}$. The sets of positive and non-negative real numbers are denoted by $\ndR_{>0}$ and $\ndR_{\ge 0}$. Throughout, we assume that all considered random variables are defined on a common probability space.  The \emph{total variation distance} between two random variables $X$ and $Y$ with values in a countable state space $S$ is defined by
\[
d_{\textsc{TV}}(X,Y) = \sup_{\cE \subset S} |\Pr{X \in \cE} - \Pr{Y \in \cE}|.
\]
A sequence of real-valued random variables $(X_n)_{n \ge 1}$ is \emph{stochastically bounded}, if for each $\epsilon > 0$ there is a constant $M>0$ with
\[
\limsup_{n \to \infty} \Pr{ |X_n| \ge M} \le \epsilon.
\]
We denote this by $X_n = O_p(1)$. Likewise, we write $X_n = o_p(1)$ if the sequence converges to $0$ in probability. We use  $\convdis$ and $\convp$ to denote convergence in distribution and probability.
A function 
\[
h: \, \ndR_{>0} \to \ndR_{>0} \]is termed \emph{slowly varying}, if for any fixed $t >0$ it holds that
\[
\lim_{x \to \infty}\frac{h(tx)}{h(x)} = 1.
\]
For any power series $f(z)$, we let $[z^n]f(z)$ denote the coefficient of $z^n$. We also use the notation
\[
[n] = \{1, \ldots n\}
\]
for $n \ge 0$. In particular, $[0] = \emptyset$.

\section{Main result}
\label{sec:mainresults}

We let $\mathbf{w} = (\omega_i)_{i \ge 0}$ denote a fixed sequence of non-negative weights such that $\omega_0 >0$ and $\omega_k>0$ for at least one integer $k \ge 2$. The only restriction that we impose on this weight-sequence is that the series 
\[
	\Phi(z) = \sum_{k \ge 0} \omega_k z^k
\]
has positive radius of convergence $\rho_\Phi > 0$. We let $\scU_n^{}$ denote the set of unlabelled unrooted plane trees with $n$ vertices. Any tree $U \in \scU_n^{}$ receives the weight
\[
	\bar{\omega}(U) = \prod_{v \in U} \omega_{d_U(v) - 1},
\]
with the index $v$ ranging over all vertices of $U$, and $d_U(\cdot)$ denoting the degree of a vertex. 
We let $\cU_n^{}$ denote the random tree sampled from the set $\scU_n^{}$ with probability proportional to its $\bar{\omega}$-weight. Of course this is only possible when there are trees of size $n$ with positive weight. Let $\spa(\mathbf{w})$ denote the greatest common divisor of all integers $i \ge 0$ satisfying $\omega_i >0$. The following basic fact clarifies which values of $n$ we may consider. It follows from Schur's lemma, see for example \cite[Thm. 3.15.2]{MR2172781}, by noting that $\sum_{v\in U} (d_U(v) - 1) = n - 2$ for $U\in\scU_n$ and that all numbers in $\{ i / \spa(\mathbf{w}) : \omega_i > 0, ~ i\le n \}$ are relatively prime for all $n$ such that the set is non-empty.
\begin{proposition}	
	If there is a tree in the set $\scU_n^{}$ having positive weight, then it holds that 
	\[
		n \equiv 2 \mod \spa(\mathbf{w}).
	\] 
	Conversely, such a tree always exists if $n$ is large enough and belongs to this congruence class.
\end{proposition}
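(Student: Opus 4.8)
The plan is to treat the two implications separately. For the forward implication, I would use that a tree on $n$ vertices has exactly $n-1$ edges, so the handshake identity gives $\sum_{v \in U} d_U(v) = 2(n-1)$ and hence $\sum_{v \in U}(d_U(v)-1) = n-2$ for every $U \in \scU_n$. If $\bar{\omega}(U) > 0$, then $\omega_{d_U(v)-1} > 0$ for each vertex $v$, so by definition of the span the integer $\spa(\mathbf{w})$ divides $d_U(v)-1$ for every $v$, and therefore divides their sum $n-2$; this is precisely the congruence $n \equiv 2 \mod \spa(\mathbf{w})$.

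For the converse I would argue by an explicit construction. Write $d = \spa(\mathbf{w})$ and $S = \{ i \ge 1 : \omega_i > 0 \}$; by hypothesis $S$ is non-empty (it contains some $k \ge 2$) and, since $\omega_0 > 0$, one has $\gcd(S) = d$. As the gcd of a set of integers is already attained on a finite subset, I fix $a_1, \dots, a_m \in S$ with $\gcd(a_1,\dots,a_m) = d$, so that $\gcd(a_1/d,\dots,a_m/d) = 1$. Schur's lemma \cite[Thm.~3.15.2]{MR2172781} then yields an $N_0$ such that every integer $N \ge N_0$ can be written as $N = \sum_{j=1}^m c_j(a_j/d)$ with all $c_j \in \ndN_0$. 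Hence, as soon as $n \equiv 2 \mod d$ is large enough that $(n-2)/d \ge N_0$, we may choose $c_1,\dots,c_m \in \ndN_0$ with $n-2 = \sum_{j=1}^m c_j a_j$.

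It then remains to realise such a decomposition as an unrooted plane tree of positive weight, and here I would build a caterpillar. Take a path through $t := \sum_j c_j$ ``spine'' vertices, designate $c_j$ of them to have target degree $a_j+1 \ge 2$, and attach to each spine vertex as many extra leaves as needed to reach its target degree. Since along the path the two endpoints have degree $1$ and the interior vertices degree $2$, and since every $a_j \ge 1$, all these leaf-counts are non-negative; the degenerate cases $t \in \{0,1\}$ are just the single edge (the only possibility when $n = 2$) and a star. Counting spine vertices plus attached leaves gives exactly $\sum_j c_j a_j + 2 = n$ vertices. (Alternatively, one could observe that the multiset consisting of $n-t$ ones together with $c_j$ copies of $a_j+1$ has $n$ elements, sums to $2(n-1)$, and hence is the degree sequence of some tree by the classical characterisation.) Every vertex of the resulting tree has degree $1$, with weight $\omega_0 > 0$, or degree $a_j+1$ for some $j$, with weight $\omega_{a_j} > 0$; thus the underlying abstract tree has positive weight, and equipping each vertex with an arbitrary cyclic ordering of its neighbourhood produces an element of $\scU_n$ with positive $\bar{\omega}$-weight, as required.

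The forward implication is essentially free, and the mathematical content of the converse is exactly Schur's lemma, which I am quoting. The only point needing care is the passage from a solution of $n-2 = \sum_j c_j a_j$ in non-negative integers to a genuine plane tree: one must check that no vertex is assigned a negative number of leaves and that the total vertex count is precisely $n$, both of which are immediate from $a_j \ge 1$. I therefore expect no real obstacle here; the only mild nuisances are keeping track of the small degenerate cases and observing that passing from an abstract tree to an element of $\scU_n$ (which is unlabelled and carries a cyclic ordering at each vertex) does not change the $\bar{\omega}$-weight.
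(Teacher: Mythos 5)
Your proof is correct and follows the same route the paper sketches: the handshake identity $\sum_{v\in U}(d_U(v)-1)=n-2$ for the forward direction, and Schur's lemma for the converse. You go further than the paper, which only asserts the existence of a positive-weight tree from Schur's lemma; your caterpillar construction (or the alternative appeal to the classical degree-sequence characterisation of trees) explicitly realises the numerical decomposition $n-2=\sum_j c_j a_j$ as an element of $\scU_n$, which is a worthwhile detail to have on record.
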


Likewise, for any positive integer $m$  we let $\scT_{m}^{}$ denote the set of  planted plane trees with $m$ vertices. Any such tree $T$ receives weight
\[
\omega(T) = \prod_{v \in T} \omega_{d^+_T(v)},
\]
with $d^+_T(\cdot)$ denoting the outdegree. We let $\cT_m^{}$ denote the tree sampled from $\scT_m^{}$ with probability proportional to its $\omega$-weight. This is well-defined  when \[m \equiv 1 \mod \spa(\mathbf{w})\] is large enough.


Our  main result reduces the study of the tree $\cU_n$  to that of the simply generated (planted) plane tree $\cT_m$. 
\begin{theorem}
	\label{te:main}
	Suppose that $\rho_\Phi>0$.
	Then there are constants $C,c>0$ and a random planted plane tree $\cV_n$, that is independent from the family  $(\cT_m)_m$ of simply generated planted plane trees and has stochastically bounded size
	\[
		K_n := |\cV_n| = O_p(1),
	\]
	such that the random tree $\cS_n$, constructed by connecting the root of $\cT_{n-K_n}$ and $\cV_n$ with an edge, satisfies
	\[
	d_{\textsc{TV}}(\cU_n, \cS_n) \le C \exp(-cn)
	\]
	for all $n$. Furthermore, there is a Galton--Watson tree $\cT$ that is subcritical or critical such that
	\[
	\cV_n \convdis \cT 
	\]
	as $n$ becomes large. The offspring distribution of $\cT$ may be  constructed from the weight-sequence $\mathbf{w}$ in a canonical way, see Equation~\eqref{eq:offsp} below.  
\end{theorem}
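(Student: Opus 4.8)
The plan is to \emph{root $\cU_n$ at an edge} and thereby reduce the question to one about pairs of planted plane trees. For an unrooted plane tree $U$ and an oriented edge $\vec e=(u\to v)$, deleting the underlying edge splits $U$ into the component of $u$ and that of $v$; at $u$, the cyclic order of its $U$-neighbours together with the distinguished deleted neighbour $v$ induces a linear order on the remaining neighbours, and at every other vertex of that component its cyclic order together with its parent induces a linear order on the rest, so the component becomes a planted plane tree rooted at $u$, and likewise for $v$. This yields a map $\{(U,\vec e):U\in\scU_n\}\to\bigsqcup_{a+b=n}\scT_a\times\scT_b$ which multiplies weights, $\bar\omega(U)=\omega(T_1)\omega(T_2)$ (a vertex of degree $d$ in $U$ becomes one of outdegree $d-1$, including the two roots), and which is a bijection \emph{except} that it is two-to-one over the diagonal pairs $(T,T)$, equivalently over the plane trees admitting a reflection across an edge. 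Hence endowing $\cU_n$ with an independent uniformly random oriented edge realizes $(\cU_n,\vec E)$, up to this ``mirror'' defect, as a size-conditioned Boltzmann pair $(\hat T_1,\hat T_2)$ with $\mathbb P((\hat T_1,\hat T_2)=(T_1,T_2))\propto\omega(T_1)\omega(T_2)\,\mathbbm{1}[|T_1|+|T_2|=n]$, glued back by an edge between the roots.

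I would then show this split is extremely lopsided. Let $q_m=\sum_{T\in\scT_m}\omega(T)$ and $y(z)=\sum_{m\ge1}q_mz^m$, so $y(z)=z\Phi(y(z))$. A case distinction according to whether $t\mapsto t\Phi'(t)-\Phi(t)$ has a zero in $(0,\rho_\Phi)$ shows the radius of convergence $\rho:=\rho_y$ lies in $(0,\infty)$ and $s:=y(\rho)<\infty$ in all cases (if $\Phi(\rho_\Phi)=\infty$ a zero necessarily exists; otherwise $s=\rho_\Phi<\infty$), so $(q_m\rho^m)_m$ is summable with sum $s$. Summability forces the convolution $Z_n:=\sum_{a=1}^{n-1}q_aq_{n-a}=\rho^{-n}\sum_a(q_a\rho^a)(q_{n-a}\rho^{n-a})$ to be carried, up to lower-order terms, by the two boundary blocks where $\min(a,n-a)$ is bounded; hence the smaller component has bounded order, $K_n:=\min(|\hat T_1|,|\hat T_2|)=O_p(1)$, and $K_n\convdis K_\infty$ with $\mathbb P(K_\infty=k)=q_k\rho^k/s$. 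Making ``carried by the boundary blocks'', and the convergence of $K_n$, rigorous under only $\rho_\Phi>0$ is, I expect, the main obstacle: it demands sharp enough control on the coefficients $q_m$ (equivalently, on the total-size law of the tilted Galton--Watson tree) uniformly over the possibly irregular non-generic weight sequences, with the periodic case $\spa(\mathbf w)>1$ treated alongside.

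Granting this, I would take $\cV_n$ — independent of the family $(\cT_m)_m$ — to be distributed as the smaller component of the Boltzmann pair (ties broken arbitrarily, which is immaterial as the two equal-size components are i.i.d.), put $K_n=|\cV_n|$, and let $\cS_n$ be $\cV_n$ joined by an edge to the root of $\cT_{n-K_n}$. Since the Boltzmann pair is a product measure restricted to $|T_1|+|T_2|=n$, conditioning on the two sizes makes its components independent, the one of size $a$ being exactly $\cT_a$; so conditionally on $K_n=k$ the larger component is a copy of $\cT_{n-k}$ independent of the smaller one, and with the weight identity $\bar\omega(U)=\omega(T_1)\omega(T_2)$ this shows the laws of $\cU_n$ and $\cS_n$ coincide on every non-symmetric plane tree. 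The total $\bar\omega$-mass of the symmetric ones is $\mathbb P(\cU_n\text{ has a reflection edge})$, at most the probability that two independent copies of $\cT_{n/2}$ agree, i.e.\ at most $\max_{T\in\scT_{n/2}}\omega(T)/q_{n/2}$; since no single planted plane tree of size $m$ carries more than an exponentially small share of $q_m$ (one can always sprinkle $\Omega(m)$ small subtrees at $\Omega(m)$ freely chosen positions of a near-optimal tree, each at a bounded multiplicative cost, producing $e^{\Omega(m)}$ trees of comparable weight), this is $O(e^{-cn})$, whence $d_{\textsc{TV}}(\cU_n,\cS_n)\le Ce^{-cn}$.

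Finally, to identify $\cT$, set $\mu_k=\omega_ks^k/\Phi(s)$, a probability distribution because $\Phi(s)=s/\rho<\infty$; a planted plane tree of size $m$ sampled with probability $\propto\omega$ is precisely a Galton--Watson tree with offspring law $\mu$ conditioned to have $m$ vertices. As $\mathbb P(K_\infty=k)=q_k\rho^k/s=\mathbb P(|\mathrm{GW}(\mu)|=k)$, the mixture over $K_n$ of these conditioned trees, which is $\cV_n$, converges in distribution to $\cT:=\mathrm{GW}(\mu)$. The tilt satisfies $s\le\rho_\Phi$, so $\mathbb E[\mu]=s\Phi'(s)/\Phi(s)\le1$ and $\cT$ is subcritical or critical; this is the law referred to in \eqref{eq:offsp}.
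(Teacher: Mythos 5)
Your edge-rooting decomposition is essentially the same bijection the paper uses (its Lemma~\ref{le:step2} splits $\cT_n^*$ at the first son of the root, producing the same $\scT^\omega\cdot\scT^\omega$ structure), and your handling of the $K_n=O_p(1)$ step via subexponentiality of $y(z)/z$ is the right tool — you correctly flag that this requires the machinery of \cite{doi:10.1002/rsa.20771}, which is exactly what the paper invokes. The offspring-law identification is also correct. So the architecture is sound and close to the paper's, condensed into a single step.

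The genuine gap is in the automorphism accounting. The map from $(U,\vec e)$ (with $\vec e$ ranging over the $2(n-1)$ oriented edges of a labelled representative) to ordered pairs $(T_1,T_2)$ has fibers of size $|\mathrm{Aut}(U)|$ — the action of $\mathrm{Aut}(U)$ on oriented edges is free because planted plane trees are asymmetric — and this is \emph{not} ``two-to-one over the diagonal.'' Take $U$ the star $K_{1,3}$: its plane automorphism group is cyclic of order $3$, so the six oriented edges collapse three-to-one onto the two pairs $(\text{leaf},\,\text{cherry})$ and $(\text{cherry},\,\text{leaf})$, neither of which is diagonal and neither of which involves a reflection across an edge. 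More generally, nontrivial automorphisms of an unrooted plane tree come in two flavors, classified by whether the tree is vertex-centered or edge-centered; your ``mirror defect'' covers only the second. The vertex-centered case is precisely where the paper does the hard work (the cycle-index computation giving $\cR_v(x)=\sum_{d\ge2}\frac{\varphi(d)}{d}\sum_{j\ge1}\frac{\omega_{jd-1}}{j}\scT^{\omega^d}(x^d)^j$, the tilting to reduce to a probability weight sequence, and the norm comparison $\rho_{\scT^{\omega^d}}^{1/d}\ge\lVert\omega\rVert_d^{-1}>1=\rho_{\scT^\omega}$ to get the radius separation~\eqref{eq:rho_S<max}). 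Without this, the total-variation bound $Ce^{-cn}$ is not established. Separately, your ``sprinkling'' heuristic for $\max_T\omega(T)/q_{n/2}=e^{-\Omega(n)}$ under only $\rho_\Phi>0$ is not a proof; the paper avoids it by comparing radii of convergence of $\scT^{\omega^2}(x^2)$ and $\scT^\omega(x)^2$ via Lemma~\ref{le:coeff_comparison_subexp}, which is both cleaner and actually rigorous at this level of generality.
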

In other words, the random unrooted tree $\cU_n$ is very likely to look like the tree $\cT_{n-K_n}$ with a small tree attached to its root, to make up for the $K_n = O_p(1)$ missing vertices. 
This result proves  that \emph{almost every} asymptotic property known for simply generated plane trees also holds for simply generated unrooted plane trees. This includes Gromov--Hausdorff--Prokhorov scaling limits, as the attached tree is so small, that it does not change the global geometric  shape. It includes Benjamini-Schramm limits, because for the relevant weight-sequences a random vertex is unlikely to fall into the small attached tree or anywhere near it. For the same reason, (central) limit theorems known for the degree of a random root carry over. The approximation also preserves limits for the maximum degree and any other graph-theoretic property, that does not get heavily perturbed by the small $O_p(1)$-sized tree. As the total variational distance is exponentially small, tail bounds for the diameter and other parameters carry over as well. Our setting is also very general, since the only assumption we made on the weight-sequence is that its generating series $\Phi(z)$ has positive radius of convergence.

 A similar approximation was constructed in \cite{SRSAtoappear} in a different setting, where it was shown that random unlabelled unordered unrooted may be approximated by random unlabelled unordered rooted trees, and hence everything known (present and future) about rooted trees carries over \emph{automatically} to the unrooted model.


\section{Proof strategy}
\label{sec:strategy}

We let $\cT_n^*$ denote the random tree sampled from the set $\scT_n$ of all planted plane trees with probability proportional to its $\bar{\omega}$-weight (as opposed to the $\omega$-weight in the definition of the random tree $\cT_m$). Our first main lemma states that $\cT_n^*$ is an excellent approximation of the random tree $\cU_n$.
\begin{lemma}
	\label{le:step1}
	Suppose that $\rho_\Phi>0$. There are constants $C,c>0$ such that as unrooted trees it holds for all $n$
	\[
		d_{\textsc{TV}}(\cU_n, \cT_n^*) \le C \exp(-cn).
	\]
\end{lemma}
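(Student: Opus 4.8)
The plan is to realise $\cT_n^*$, viewed as an unrooted tree, as a mild reweighting of $\cU_n$, and then to show that the reweighting is negligible. Recall the standard combinatorial identification of planted plane trees with $n$ vertices with isomorphism classes of pairs $(U,\kappa)$, where $U$ is an unrooted plane tree with $n$ vertices and $\kappa$ is a distinguished \emph{corner} of $U$ (equivalently, a distinguished oriented edge): choosing a corner simultaneously selects the root and turns the cyclic order at the root into a linear order of its children. This identification preserves weights, $\bar\omega(T) = \bar\omega(\pi(T))$, where $\pi$ is the map forgetting the distinguished corner. The automorphism group of a plane tree is cyclic and acts freely on the set of its $2(n-1)$ corners — a non-trivial automorphism is a rotation about the unique central vertex and hence fixes no corner — so a fixed unrooted plane tree $U$ with $n$ vertices arises from exactly $2(n-1)/|\mathrm{Aut}(U)|$ planted plane trees. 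Hence
\[
	\Pr{\pi(\cT_n^*) = U} = \frac{\bar\omega(U)/|\mathrm{Aut}(U)|}{\sum_{U'} \bar\omega(U')/|\mathrm{Aut}(U')|},
	\qquad\text{whereas}\qquad
	\Pr{\cU_n = U} = \frac{\bar\omega(U)}{\sum_{U'} \bar\omega(U')}.
\]

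These two probability measures on the countable set of unrooted plane trees with $n$ vertices differ only through the pointwise factor $U\mapsto |\mathrm{Aut}(U)|^{-1}\in\{1,1/2,1/3,\dots\}$; after renormalisation they coincide on the set of trees with trivial automorphism group. An elementary estimate of the total variation distance therefore gives $d_{\textsc{TV}}(\cU_n,\cT_n^*)\le C'\,\Pr{|\mathrm{Aut}(\cU_n)|>1}$ for an absolute constant $C'$, and it remains to show that $\Pr{|\mathrm{Aut}(\cU_n)|>1}$ decays exponentially.

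For this I compare generating series. A plane tree with a non-trivial automorphism admits a rotational symmetry of some prime order $p\ge 2$ about its centre $c$, and so consists of $c$ together with $p$ cyclically repeated copies of a single ordered forest $F$ of $j\ge 1$ planted plane trees; then $\deg(c)=pj$ and $\bar\omega(U)=\omega_{pj-1}\,W(F)^p$, where $W(F)$ is the $\omega$-weight of $F$. Summing over $p$, $j$ and $F$ bounds the generating series of all symmetric unrooted plane trees by $\sum_{p\text{ prime}} z\,\Psi_p\bigl(B^{(p)}(z^p)\bigr)$, where $\Psi_p(u)=\sum_{j\ge1}\omega_{pj-1}u^j$ and $B^{(p)}(z)=z\sum_{k\ge0}\omega_k^{p}\,B^{(p)}(z)^k$ is the generating series of planted plane trees carrying the \emph{amplified} weights $\omega_k^{p}$. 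The point is that each of these series has radius of convergence strictly larger than the radius $R$ of the generating series $\mathcal{U}(z)=\sum_U\bar\omega(U)z^{|U|}$ of all unrooted plane trees (which equals that of the $\omega$-weighted planted generating series $B(z)$ and is positive since $\rho_\Phi>0$): a forest of total size $m$ contributes only to the coefficient of $z^{pm+1}$, while the $p$-fold repetition of its weight turns the relevant exponential growth rate into that of the $\omega_k^{p}$-weighted planted trees, and this stays below $R^{-p}$ by a margin uniform in $p$ because the exponential growth rate $\beta$ of the \emph{largest} $\omega$-weight of a planted plane tree of size $n$ satisfies $\beta R<1$ strictly — the latter in turn because, under the standing assumption that $\omega_k>0$ for some $k\ge2$, there are exponentially many planted plane trees of size $n$ whose $\omega$-weight is within a constant factor of the maximal one. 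Combined with the lower bound $[z^n]\mathcal{U}(z)\ge((1+\varepsilon)R)^{-n}$, valid for every fixed $\varepsilon>0$ and all large $n$ in the progression $n\equiv 2\pmod{\spa(\mathbf{w})}$ by singularity analysis, dividing yields $\Pr{|\mathrm{Aut}(\cU_n)|>1}\le Ce^{-cn}$, which (after enlarging $C$ to absorb the finitely many small $n$) completes the proof.

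The step I expect to be the main obstacle is this last one, carried out under the sole hypothesis $\rho_\Phi>0$: one must establish $\beta R<1$ — equivalently the exponential abundance of near-extremal trees — and hence the strict separation of radii in \emph{all} regimes, in particular when $R\ge1$, where the cruder bound obtained by replacing $B^{(p)}(z^p)$ with $B(z^p)$ is too lossy and need not even be an upper bound, and when the weight sequence is so degenerate that the weight-maximising trees are essentially unique.
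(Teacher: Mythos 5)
Your broad strategy matches the paper's: realise $\cT_n^*$ as a corner-rooting of $\cU_n$, reduce the total-variation distance to the probability that $\cU_n$ has a non-trivial automorphism, and then bound that probability by a comparison of generating series — the series enumerating symmetric plane trees (which you package as $\sum_p z\,\Psi_p(B^{(p)}(z^p))$, essentially the paper's $\cR_v(x)$ built from $\scT^{\omega^d}(x^d)$) against the full series. Your rooting lemma (a free action of $\mathrm{Aut}(U)$ on the $2(n-1)$ corners) is a clean equivalent of the paper's passage through labelled symmetries and cycle index sums, and the estimate $d_{\textsc{TV}}(\cU_n,\cT_n^*)\le C'\Pr{|\mathrm{Aut}(\cU_n)|>1}$ is correct. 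You should, however, also account for the edge-centered case: a plane tree whose centre is an edge admits only an order-$2$ reflection across that edge, which is \emph{not} a rotation about a central vertex and produces the separate term $\scT^{\omega^2}(x^2)$ in the paper's $\cR_e$.

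The genuine gap is in the crucial step you yourself flag. You claim that the strict inequality $\beta R<1$ is ``equivalent'' to the exponential abundance of near-extremal trees and propose to prove the latter. That equivalence is false in the direction you need, and the abundance claim itself fails for ordinary critical weight sequences. Take $\omega_0=\tfrac14$, $\omega_1=\tfrac12$, $\omega_2=\tfrac14$, so $\Phi(z)=(1+z)^2/4$, $\tau=1$ and $R=\rho_{\scT^\omega}=1$. Maximising $\omega(T)=2^{-2N_0-N_1-2N_2}$ over degree sequences with $N_0+N_1+N_2=n$, $N_1+2N_2=n-1$ forces $N_2=0$, $N_0=1$, $N_1=n-1$; the unique maximiser is the path, the maximum is $2^{-(n+1)}$, and the number of trees with weight within a constant factor of the maximum grows only polynomially (allowing $N_2=k$ costs a factor $4^{-k}$ but contributes only $\Theta(n^{2k})$ trees). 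So there is no exponential abundance, yet $\beta R=\tfrac12<1$ holds for a different reason. The paper obtains the needed strict separation of radii by tilting: after rescaling $\omega_k\mapsto ab^k\omega_k$ one may assume $\rho_{\scT^\omega}=1$, which forces $\rho_\Phi\ge1$ and makes $\mathbf{w}$ a probability weight sequence; then $\rho_{\scT^{\omega^d}}\ge\Phi_d(1)^{-1}=\lVert\omega\rVert_d^{-d}$, hence $\rho_{\scT^{\omega^d}}^{1/d}\ge\lVert\omega\rVert_d^{-1}\ge\lVert\omega\rVert_2^{-1}>1$ uniformly in $d\ge2$, the last strict inequality because $\omega$ is supported on at least two values. (Incidentally, after tilting one also gets $\omega(T)\le(\max_k\omega_k)^{|T|}$ with $\max_k\omega_k<1$, so $\beta R<1$ does hold — just not by your proposed argument.) Finally, your appeal to singularity analysis for the lower bound on $[z^n]\mathcal U(z)$ is not available at the stated generality (subcritical $\mu<1$, no analytic continuation); the paper instead uses subexponentiality of $\scT^\omega(x)/x$ in the sense of Chover–Ney–Wainger together with the coefficient-comparison Lemma~\ref{le:coeff_comparison_subexp}.
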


In order to deduce Theorem~\ref{te:main}, we consider the fringe subtree $\cT_n^{(1)}$ at the first son of the root of $\cT_n^*$ and the remaining pruned tree $\cT_n^{(2)}$. Recent results for convergent Gibbs partitions~\cite{doi:10.1002/rsa.20771} show that the maximum size of the two trees belongs to $n  + O_p(1)$.
\begin{lemma}
	\label{le:step2}
	Suppose that $\rho_\Phi>0$. Let $a,b$ be non-negative integers with $a+b =n$ such that the event $(|\cT_n^{(1)}|,|\cT_n^{(2)}|)  = (a,b)$  has positive probability. Then the conditioned pair of trees
	\[
		((\cT_n^{(1)}, \cT_n^{(2)}) \mid |\cT_n^{(1)}| = a, |\cT_n^{(2)}| = b)  
	\]
	is distributed like the pair of an independent copy of $\cT_a$ and of $\cT_b$. Moreover, if we let $\cT_n^{\mathrm{max}}$ denote the largest tree in the forest $\cT_n^{(1)}, \cT_n^{(2)}$, and $\cT_n^{\mathrm{min}}$ the smallest one, then there is a Galton--Watson tree $\cT$ that is critical or subcritical such that
	\begin{align}
		\label{eq:tvlimit}
		\cT_n^{\mathrm{min}} \convdis \cT
	\end{align}
	as $n \equiv 2 \mod \spa(\mathbf{w})$ becomes large.
	As the tree $\cT$ is almost surely finite, this implies that
	\[
		|\cT_n^{\mathrm{max}}| = n + O_p(1).
	\]
	The offspring distribution of $\cT$ is made explicit in Equation~\eqref{eq:offsp}.
\end{lemma}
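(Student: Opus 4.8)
The plan is to first factorise the $\bar\omega$-weight of $\cT_n^*$ along the decomposition $(\cT_n^{(1)},\cT_n^{(2)})$, then re-weight so that the two pieces become independent conditioned Galton--Watson trees, and finally apply the theory of convergent Gibbs partitions to control the asymptotics.

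\emph{Step 1: weight factorisation.} For $n\ge 2$, a planted plane tree $T\in\scT_n$ has a root with at least one child, so it corresponds bijectively to an ordered pair $(T^{(1)},T^{(2)})$ of planted plane trees with $|T^{(1)}|+|T^{(2)}|=n$, where $T^{(1)}$ is the fringe subtree at the first child of the root and $T^{(2)}$ is $T$ with $T^{(1)}$ removed. Tracking how out-degrees and root-status change under this operation — only the root of $T^{(2)}$ changes its out-degree, losing exactly one child — one checks that $d_T(v)-1$ equals the out-degree of $v$ inside whichever part contains it, so that
\[
\bar\omega(T)=\prod_{v\in T}\omega_{d_T(v)-1}=\Bigl(\prod_{v\in T^{(1)}}\omega_{d^+_{T^{(1)}}(v)}\Bigr)\Bigl(\prod_{v\in T^{(2)}}\omega_{d^+_{T^{(2)}}(v)}\Bigr)=\omega(T^{(1)})\,\omega(T^{(2)}).
\]
Hence $\Pr{\cT_n^*=T}$ is proportional to $\omega(T^{(1)})\,\omega(T^{(2)})$; conditioning on $(|\cT_n^{(1)}|,|\cT_n^{(2)}|)=(a,b)$ then makes the normalising constant factor as $\bigl(\sum_{T\in\scT_a}\omega(T)\bigr)\bigl(\sum_{T\in\scT_b}\omega(T)\bigr)$, which is precisely the claim that the conditioned pair is an independent copy of $(\cT_a,\cT_b)$.

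\emph{Step 2: re-weighting.} Writing $T(z)=\sum_m\bigl(\sum_{T\in\scT_m}\omega(T)\bigr)z^m$, the classical analysis of simply generated trees recalled in Section~\ref{sec:preliminaries} gives, since $\rho_\Phi>0$, that its radius $\rho_T$ and value $\tau:=T(\rho_T)$ satisfy $0<\rho_T,\tau<\infty$, $\rho_T=\tau/\Phi(\tau)$, and $\tau\Phi'(\tau)\le\Phi(\tau)$; consequently $\Pr{\xi=k}=\omega_k\tau^k/\Phi(\tau)$ (this being Equation~\eqref{eq:offsp}) is a critical or subcritical offspring distribution, so the associated Galton--Watson tree $\cT$ is almost surely finite. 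From $\sum_{v\in T}d^+_T(v)=|T|-1$ one obtains the exponential-tilting identity $\omega(T)\rho_T^{|T|}=\tau\,\Pr{\cT=T}$. Multiplying numerator and denominator of $\Pr{\cT_n^*=T}\propto\omega(T^{(1)})\omega(T^{(2)})$ by $\rho_T^{n}$, this identity shows that $(\cT_n^{(1)},\cT_n^{(2)})$ has the law of $\bigl((\cT',\cT'')\mid|\cT'|+|\cT''|=n\bigr)$ for independent copies $\cT',\cT''$ of $\cT$; in particular $\{\cT_n^{(1)},\cT_n^{(2)}\}$ is a Gibbs partition of convergent type whose outer structure is a sequence of length two (with entire generating function $x\mapsto x^2$), and, again by exponential tilting, $\cT_n^{\min}$ conditioned on its size has the same law as $\cT$ conditioned on the same size.

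\emph{Step 3: asymptotics.} Here I would invoke the results on convergent Gibbs partitions of \cite{doi:10.1002/rsa.20771}: conditioning the independent pair on total size $n$, with $n\equiv 2\bmod\spa(\mathbf{w})$ and $n\to\infty$, produces with probability tending to one a unique giant tree of size $n-O_p(1)$ and a remaining tree converging in distribution to $\cT$. The analytic core is the local subexponential estimate $\sum_j p_jp_{n-j}\sim 2p_n$ together with $p_{n-k}\sim p_n$ along the congruence class, where $p_m:=\Pr{|\cT|=m}$; granting it, $\Pr{|\cT_n^{\min}|=k}=2p_kp_{n-k}\big/\sum_j p_jp_{n-j}\to p_k$ for each fixed $k$, and combined with the conditional-law identity from Step~2 this yields $\Pr{\cT_n^{\min}=S}\to\Pr{\cT=S}$ for every finite plane tree $S$. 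Since $\cT$ is almost surely finite, Scheffé's lemma promotes this to convergence in total variation, giving \eqref{eq:tvlimit}, after which $|\cT_n^{\max}|=n-|\cT_n^{\min}|=n+O_p(1)$ follows from the tightness of $\cT_n^{\min}$. The main obstacle is precisely this step: because the only hypothesis is $\rho_\Phi>0$, the singularity of $\Phi$ — and hence the tail of $|\cT|$ — may be very weak (merely $\Delta$-analytic, or carrying a slowly varying correction), and securing the local subexponential estimate at this level of generality is exactly what forces us to rely on \cite{doi:10.1002/rsa.20771} rather than on an elementary singularity analysis of $T(z)$.
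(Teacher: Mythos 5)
Your proof is correct and follows essentially the same route as the paper: decompose $\cT_n^*$ into the fringe subtree at the first child and the pruned remainder, observe that the $\bar\omega$-weight factorises as $\omega(T^{(1)})\omega(T^{(2)})$ so that $\cT_n^*$ is a $\scX^2\circ\scT^\omega$ composite, and then invoke the convergent Gibbs partition results of \cite{doi:10.1002/rsa.20771} (the paper cites them as Lemma~\ref{lem:giant_component}) together with the subexponentiality of $\scT^\omega(x)/x$ from Lemma~\ref{le:partf}. The only cosmetic difference is that you spell out the tilting identity $\omega(T)\rho_T^{|T|}=\tau\,\Pr{\cT=T}$ and the convolution estimate and Scheff\'e step explicitly, whereas the paper packages these into the cited lemma.
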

Theorem~\ref{te:main} follows readily from this result. We are going to prove Lemmas~\ref{le:step1} and \ref{le:step2} in Section~\ref{sec:proof}.

\begin{remark}
	\label{re:superexponential}
	The present work focuses on the case $\rho_\Phi>0$. It is natural to wonder what happens when the radius of convergence $\rho_\Phi$ equals zero. 
	
	In fact, Lemma~\ref{le:step2} still holds in this case with the offspring distribution $\xi$ from Equation~\eqref{eq:offsp} being concentrated on $0$ and hence $\cT$ consisting almost surely of a single root vertex. This may be proved in an analogous fashion as the case $\rho_\Phi>0$ by using results for superexponential Gibbs partitions~\cite[Lem. 6.17, Thm. 6.18]{2016arXiv161202580S} instead of results for the convergent case of Gibbs partitions~\cite[Lem. 3.3, Thm. 3.1]{doi:10.1002/rsa.20771}.
	
	We conjecture that Lemma~\ref{le:step1} and hence also Theorem~\ref{te:main} still hold for $\rho_\Phi = 0$. The bounds in the proof of \cite[Thm. 6.18]{2016arXiv161202580S} may prove helpful for this, but we did not go through any details.
\end{remark}

\section{Applications}
\label{sec:applications}

In this Section we provide some applications of our main result. We collect their proofs in Section~\ref{sec:profapp}. In order to precisely state them we make use of a canonical choice for an offspring distribution $\xi$ such that for any admissible integer $m$ the simply generated tree $\cT_m$ is distributed like a $\xi$-Galton--Watson tree $\cT$ conditioned on having $m$ vertices. We recall its construction as in \cite[Thm. 7.1]{MR2908619}, but refer the reader to this source for detailed justifications.

We  set
\begin{align}
\label{eq:psi_nu}
\Psi(t) := t\Phi'(t)/\Phi(t)
\quad
\text{and}
\quad
\nu := \lim_{t \nearrow \rho_\Phi} \Psi(t) \in ]0, \infty].
\end{align}
If $\nu \ge 1$, there is a unique finite number $\tau$ with $\Psi(\tau) = 1$. If $\nu <1$, we set $\tau = \rho_\Phi$. Then the random non-negative integer $\xi$ with distribution
\begin{align}
\label{eq:offsp}
\Pr{\xi = k} = \omega_k \tau^k / \Phi(\tau), \qquad k \ge 0,
\end{align}
has mean
\begin{align}
\mu := \min(\nu, 1)
\end{align}
and variance
\begin{align}
\label{eq:sigma}
\sigma^2 := \tau \Psi'(\tau).
\end{align}
By \cite[Lem. 4.1]{MR2908619} we have for any admissible integer $m$ that the simply generated tree $\cT_m$ is distributed like the $\xi$-Galton--Watson tree $\cT$ conditioned on having $m$ vertices.
\subsection{Scaling limits and tail bounds for the diameter}
\label{sec:scaling}
Gromov--Hausdorff--Prokhorov scaling limits describe the asymptotic global geometric behaviour of a sequence of random geometric spaces. We refer the reader to the surveys by Haas~\cite{2016arXiv160507873H} and Le Gall and Miermont~\cite{MR3025391} for an overview on scaling limits of random trees, and in-depth discussions of real-trees and the Gromov--Hausdorff--Prokhorov distance $d_{\textsc{GHP}}$ on the space $\ndK$ of equivalence classes of measured compact metric spaces. A brief introduction is provided below.

\begin{theorem}
	\label{te:scaling}
	Let the random tree $\cU_n$ be endowed with the uniform measure $\nu_n$ on its leaves. Assume that $\mu = 1$.
	\begin{enumerate}
		\item Let  $\cT_{\mathrm{Br}}$ denote the (Brownian) continuum random tree introduced by Aldous~\cite{MR1207226}, and $\mu_{\mathrm{Br}}$ its probability measure on its set of leaves. If the variance $\sigma^2$ is finite, then 
\begin{align}
	\label{eq:ap1}
			\left (\frac{\sigma \cU_n}{2 \sqrt{n}}, \nu_n \right ) \convdis (\cT_{\mathrm{Br}}, \mu_{\mathrm{Br}})
\end{align}
		in the space $(\ndK, d_{\textsc{GHP}})$ as $n$ becomes large. Moreover, there are constants $C,c>0$ such that the diameter $\Di(\cU_n)$ of the tree $\cU_n$ satisfies for all $n$ and $x \ge 0$
\begin{align}
		\label{eq:ap2}
			\Pr{\Di(\cU_n) \ge x}\le C \exp(-c x^2 /n).
\end{align}
		\item Suppose that $\spa(\mathbf{w})=1$ and that the random variable $\xi$ belongs to the domain of attraction of a stable-law with index $\alpha\in]1,2[$. Let $(\cT_\alpha, \mu_\alpha)$ denote the $\alpha$-stable L\'evy tree introduced by Le Gall and Le Jan~\cite{MR1617047}. Then there is a slowly varying sequence $g(n)$ such that
\begin{align}
		\label{eq:ap3}
			\left (\frac{\cU_n}{g(n)n^{1-1/\alpha}}, \nu_n \right ) \convdis (\cT_\alpha, \mu_\alpha)
\end{align}
		in the space $(\ndK, d_{\textsc{GHP}})$ as $n$ becomes large. Moreover, for all $\delta \in ]0, \alpha[$ there are constants $C,c>0$ such that for all $n$ and $x \ge 0$
\begin{align}
			\label{eq:ap4}
			\Pr{\Di(\cU_n) \ge x} \le C \exp \left(-c \left(\frac{x}{g(n) n^{1 - 1/\alpha}} \right)^{\delta} \right).
\end{align}
	\end{enumerate}
\end{theorem}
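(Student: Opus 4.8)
The plan is to transport the corresponding results for simply generated planted plane trees $\cT_m$ --- Aldous' scaling limit towards the Brownian continuum random tree \cite{MR1207226}, the $\alpha$-stable analogue of Le Gall and Le Jan \cite{MR1617047}, the sub-Gaussian height tail bounds of Addario-Berry, Devroye and Janson, and Kortchemski's sub-exponential height tail bounds in the stable regime --- through the geometric approximation of Theorem~\ref{te:main}. By that theorem $d_{\textsc{TV}}(\cU_n, \cS_n) \le C\exp(-cn)$, where $\cS_n$ is obtained from $\cT_{n-K_n}$ by joining its root through a new edge to an independent tree $\cV_n$ of size $K_n = |\cV_n| = O_p(1)$, and by Lemma~\ref{le:step2} the pair $(\cT_{n-K_n}, \cV_n)$ is, conditionally on $K_n = k$, an independent pair distributed as $(\cT_{n-k}, \cT_k)$ (note $n \equiv 2$ and hence $n-K_n \equiv K_n \equiv 1 \bmod \spa(\mathbf{w})$, so these conditioned trees are defined). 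Forming the measured metric space with the uniform leaf measure is a deterministic function of the unrooted tree, so this total variation bound passes to $(\cU_n, \nu_n)$ and $(\cS_n, \nu_n^{\cS})$, where $\nu_n^{\cS}$ is the uniform measure on the leaves of $\cS_n$. Hence I would establish \eqref{eq:ap1} and \eqref{eq:ap3} for $\cS_n$ and bound $\Pr{\Di(\cS_n) \ge x}$; the error $C\exp(-cn)$ is absorbed into the right-hand sides of \eqref{eq:ap2} and \eqref{eq:ap4}, since $\Di(\cU_n) \le n-1$ forces the probability to vanish for $x \ge n$, while for $x < n$ one has $\exp(-cn) \le \exp(-cx^2/n)$ and, since $\delta < \alpha$ makes $n^{\delta/\alpha}/g(n)^\delta = o(n)$, also $\exp(-cn) \le \exp(-c(x/(g(n)n^{1-1/\alpha}))^\delta)$ for $n$ large.

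For the scaling limits I would start from the base convergences $(\tfrac{\sigma}{2\sqrt m}\cT_m, \lambda_m) \convdis (\cT_{\mathrm{Br}}, \mu_{\mathrm{Br}})$ and $(\tfrac{1}{g(m)m^{1-1/\alpha}}\cT_m, \lambda_m) \convdis (\cT_\alpha, \mu_\alpha)$, where $\lambda_m$ is the uniform measure on the leaves of $\cT_m$; these follow from the convergence of the associated height or contour processes together with the law of large numbers giving that the number of leaves of $\cT_m$ is concentrated around $\Pr{\xi = 0}\,m$ \cite{MR2908619}. Since $K_n = O_p(1)$ we have $\sqrt{(n-K_n)/n} \convp 1$, and by slow variation of $g$ also $g(n-K_n)(n-K_n)^{1-1/\alpha}/(g(n)n^{1-1/\alpha}) \convp 1$, so rescaling $\cT_{n-K_n}$ by the ``$n$''-constant is asymptotically the same. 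Then $\cS_n$ equipped with $\nu_n^{\cS}$ lies at $d_{\textsc{GHP}}$-distance $o_p(1)$ from $\cT_{n-K_n}$ equipped with the push-forward of $\lambda_{n-K_n}$: attaching $\cV_n$ moves the rescaled metric by at most $\tfrac{\sigma}{2\sqrt n}(\Di(\cV_n)+1) = o_p(1)$ (and likewise $o_p(1)$ in the stable rescaling), and $\nu_n^{\cS}$ differs from that push-forward by a measure of total mass $O_p(1/n)$ carried within $o_p(1)$ of the attachment point, which is negligible in the Prokhorov distance. This yields \eqref{eq:ap1} and \eqref{eq:ap3}.

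For the tail bounds, a path in $\cS_n$ decomposes into a part in $\cT_{n-K_n}$, the bridge edge, and a part in $\cV_n$, so $\Di(\cS_n) \le \Di(\cT_{n-K_n}) + 1 + \Di(\cV_n)$ and
\[
\Prb{\Di(\cS_n) \ge x} \le \Prb{\Di(\cT_{n-K_n}) \ge \tfrac{x-1}{2}} + \Prb{\Di(\cV_n) \ge \tfrac{x-1}{2}} .
\]
Conditioning on $K_n = k$ and using that the diameter is at most twice the height, the sub-Gaussian height bound for conditioned Galton--Watson trees gives, in the finite-variance case, $\Pr{\Di(\cT_j) \ge y} \le C\exp(-cy^2/j)$ for all $j \ge 1$ and $y \ge 0$; since $n-k \le n$ and $k \le n$, each conditional term is at most $C'\exp(-c'x^2/n)$ uniformly in $k$, and integrating against the law of $K_n$ (whose atoms sum to one) preserves the bound, yielding \eqref{eq:ap2} for $\cS_n$ and hence for $\cU_n$. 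The stable case is identical with $C\exp(-c(y/(g(j)j^{1-1/\alpha}))^\delta)$ in place of the Gaussian tail --- available for every $\delta \in ]0,\alpha[$ from Kortchemski's estimates --- together with Potter's bounds, which dominate $g(n-k)(n-k)^{1-1/\alpha}$ and $g(k)k^{1-1/\alpha}$ by a fixed multiple of $g(n)n^{1-1/\alpha}$ for all $1 \le k < n$.

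The step I expect to be most delicate is this last one, or rather the temptation to carry it out crudely: in general $K_n$ has only a polynomial tail --- one checks $\Pr{K_n \ge k} \asymp k^{-1/2}$ in typical cases by singularity analysis of the relevant generating series --- so bounding $\Di(\cS_n) \le \Di(\cT_{n-K_n}) + K_n + 1$ and taking a union bound over the value of $K_n$ would be far too weak to reach the $\exp(-cx^2/n)$ scale. What saves the argument is that \emph{no} tail bound on $K_n$ is needed: conditionally on $K_n = k$ both pieces have size at most $n$, so their height tails are controlled at scale $\sqrt n$ (respectively $g(n)n^{1-1/\alpha}$) uniformly in $k$, and the atoms of $K_n$ merely integrate to one. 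The remaining ingredients --- the concentration of the number of leaves, the Prokhorov estimate for $\nu_n^{\cS}$ under the surgery, and the regular-variation bookkeeping for $g$ --- are routine.
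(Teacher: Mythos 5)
Your proposal is correct and follows essentially the same route as the paper: transfer from $\cU_n$ to the surrogate via the exponentially small total-variation bound, establish the scaling limit for the surrogate by separating the Hausdorff perturbation (small attached tree) from the Prokhorov perturbation (leaf-count shift, controlled by the law of large numbers for the number of leaves) and absorbing the $\sqrt{n}$-vs-$\sqrt{n-K_n}$ rescaling via a diameter-controlled GHP estimate, and derive the tail bound by a union bound over the two pieces together with the uniform-in-$j\le n$ height tail. The only cosmetic difference is that the paper phrases the tail bound argument through $\cT_n^*$ and heights while you phrase it through $\cS_n$ and diameters, and you correctly isolate the key point that no tail estimate on $K_n$ is needed because the per-size bound already sits at scale $n$ uniformly.
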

Limits of this form have far-reaching consequences, see for example the recent work~\cite{doi:10.1137/1.9781611975031.60}.
Let us very briefly explain relevant notation used in Theorem~\ref{te:scaling}. For any metric space $(Z,d_Z)$ and any  real number $\beta > 0$ we let $\beta Z$ denote the rescaled space $(Z, \beta d_Z)$.  The \emph{Hausdorff-distance} defines a metric on the collection of compact subsets of $Z$. It is given by
\[
	d_{\textsc{H}}(A,B) = \inf\{\epsilon > 0 \mid A \subset U_{\epsilon}(B), B \subset U_{\epsilon}(A)\} 
\]
with $A^{\epsilon}$ denoting the $\epsilon$-hull of the subset $A$. That is, the set of all points in $Z$ with distance less than $\epsilon$ from $A$. The \emph{Prokhorov-distance} metrizes weak convergence of probability  measures on the Borel $\sigma$-field $\cB(Z)$ of $Z$.  For any two such measures $\mathbb{P}$ and $\mathbb{P}'$ it is given by
\[
	d_{\textsc{P}}(\mathbb{P}, \mathbb{P}') = \inf\{ \epsilon >0 \mid \forall A \in \cB(X): \mathbb{P}(A) \le \mathbb{P}'(A^\epsilon) + \epsilon,\mathbb{P}'(A)  \le \mathbb{P}(A^\epsilon) + \epsilon \}.
\]

Let  $(X, d_X)$, $(Y, d_Y)$ be compact metric spaces endowed with Borel probability measures $\mathbb{P}_X$ and $\mathbb{P}_Y$. Unless these spaces are subspaces of a common space, we cannot measure their distance using the Hausdorff or Prokhorov metric. The natural solution is to consider embeddings.
The \emph{Gromov--Hausdorff} distance of  $(X, d_X)$ and $(Y, d_Y)$ is given by the minimal Hausdorff distance of isometric copies of $X$ and $Y$ in a common space. That is
\[
	d_{\textsc{GH}}(X,Y) = \inf_{\phi_X, \phi_Y} d_{\textsc{H}}(\phi_X(X),\phi_Y(Y))
\]
with the indices ranging over all isometric embeddings $\phi_X: X \to Z$ and $\phi_Y: Y \to Z$ for all choices of metric spaces $(Z, d_Z)$. Letting $\phi^*_X \mathbb{P}_X$ and $\phi^*_Y \mathbb{P}_Y$ denote the push-forwards of the measures, the \emph{Gromov--Hausdorff--Prokhorov} distance is defined by
\[
d_{\textsc{GHP}}((X, \mathbb{P}_X),(Y, \mathbb{P}_Y)) = \inf_{\phi_X, \phi_Y} \min(d_{\textsc{H}}(\phi_X(X),\phi_Y(Y)), d_{\textsc{P}}(\phi^*_X \mathbb{P}_X,\phi^*_Y \mathbb{P}_Y)).
\]
The GHP distance satisfies the axioms of a pre-metric and hence induces a metric on the collection of equivalence classes $\ndK$ of measured compact metric spaces.  Here two such spaces are equivalent if and only if there is a measure-preserving isometry between them. Here we admittedly have swept a small issue under the table, as these classes are not sets and hence technically their collection is not a well-defined object. But this issue is easily resolved by working with representatives instead, compare with \cite[Remark 7.2.5]{MR1835418}.

The metric space $(\ndK, d_{\textsc{GHP}})$ is known to be complete and separable \cite[Thm. 2.3]{MR3035742}, and hence we use the classical notions of  weak convergence of Borel-measures on Polish spaces \cite[Ch. 1]{MR0310933}.

\subsection{Benjamini-Schramm limits and degree distribution}
\label{sec:local}

The term \emph{Benjamini-Schramm limit} refers to the local weak limits of a sequence of finite random graphs with respect to an uniformly at random sampled point.  The name is used in honour of the authors of \cite{MR1873300}.

\begin{theorem}
	\label{te:bslimit}
	Suppose that $\mu=1$ and $\sigma^2<\infty$. 
	\begin{enumerate}
		\item 	Let $\cT^\bullet$ denote the Benjamini-Schramm limit of the conditioned Galton--Watson trees $(\cT_m)_{m \ge 1}$ established by Aldous~\cite{MR1102319}. Then $\mU_n$ converges also in the Benjamini-Schramm sense toward $\cT^\bullet$. 
		\item If $N_d$ denotes the number of vertices of degree $d \ge 1$ in the tree $\mU_n$, then
	\[
		\frac{N_d - \Pr{\xi = d-1}}{\sqrt{n}} \convdis \cN(0, \sigma_d^2)
	\]
	for some $0<\sigma_d <\infty$.
	\end{enumerate}
\end{theorem}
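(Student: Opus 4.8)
\emph{Overall strategy.} The plan is to transport the corresponding statements for conditioned Galton--Watson trees along the geometric approximation of Theorem~\ref{te:main}. That theorem supplies a planted tree $\cV_n$ with $K_n := |\cV_n| = O_p(1)$, independent of the family $(\cT_m)_m$, together with constants $C,c>0$, such that $d_{\textsc{TV}}(\cU_n, \cS_n) \le C\exp(-cn)$, where $\cS_n$ arises from $\cT_{n-K_n}$ by joining its root to $\cV_n$ with an edge. Since $\cU_n$ and $\cS_n$ have the same number $n$ of vertices, they can be coupled to coincide with probability $1 - O(\exp(-cn))$, so every Benjamini--Schramm limit and every distributional limit of a graph-measurable functional transfers verbatim from $\cS_n$ to $\cU_n$ (and hence to $\mU_n$, which is $\cU_n$ regarded as an unrooted graph). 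Hence it suffices to treat $\cS_n$. We first note that in $\cS_n$ every vertex of the planted part $\cT_{n-K_n}$ has graph-degree equal to its outdegree plus one --- the additional edge being the parent edge, and for the root of $\cT_{n-K_n}$ the new edge to $\cV_n$ --- while only the $K_n = O_p(1)$ vertices of $\cV_n$ carry a ``perturbed'' degree.

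\emph{Part (1).} Fix a finite rooted graph $H$ and an integer $r \ge 1$, and write $u_r(G,H)$ for the proportion of vertices $v$ of a finite graph $G$ whose radius-$r$ ball $B_r(G,v)$, rooted at $v$, is isomorphic to $H$. A uniformly chosen vertex of $\cS_n$ lies in $\cV_n$ with probability $K_n/n \to 0$ in probability, and for a vertex $v$ of $\cT_{n-K_n}$ at distance greater than $r$ from its root $o$ one has $B_r(\cS_n, v) \cong B_r(\cT_{n-K_n}, v)$. Therefore
\[
\Bigl| u_r(\cS_n, H) - \tfrac{n - K_n}{n}\, u_r(\cT_{n - K_n}, H) \Bigr| \;\le\; \frac{K_n + |B_r(\cT_{n - K_n}, o)|}{n}.
\]
First I would use the well-known fact that, for each fixed $r$, the ball $B_r(\cT_m, o)$ converges in distribution as $m \to \infty$ through admissible values --- a consequence of the local limit theorem for the total progeny of a critical finite-variance Galton--Watson tree, which keeps the generation sizes near the root stochastically bounded --- so that $|B_r(\cT_m, o)| = O_p(1)$ uniformly in $m$. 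Together with $K_n = O_p(1)$ and the independence of $K_n$ from $(\cT_m)_m$, conditioning on the value of $K_n$ then shows that the right-hand side above tends to $0$ in probability; the same conditioning argument, applied to Aldous's theorem~\cite{MR1102319} (see also~\cite{MR2908619}) that $\cT_m \to \cT^\bullet$ in the Benjamini--Schramm sense, gives that $\cT_{n-K_n}$, then $\cS_n$, and finally $\cU_n$ all converge in the Benjamini--Schramm sense to $\cT^\bullet$.

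\emph{Part (2).} Writing $N^+_k(T)$ for the number of vertices of outdegree $k$ in a planted tree $T$, the degree bookkeeping recorded above gives
\[
N_d(\cS_n) = N^+_{d-1}(\cT_{n - K_n}) + R_n, \qquad 0 \le R_n \le K_n = O_p(1),
\]
and, by the coupling, $N_d$ in $\mU_n$ has the same distributional behaviour as $N_d(\cS_n)$. The plan is then to apply the classical central limit theorem for conditioned Galton--Watson trees of finite offspring variance (see, e.g., \cite{MR2908619}): with $\xi$ as in~\eqref{eq:offsp}, as $m \to \infty$ through admissible values,
\[
\frac{N^+_{d-1}(\cT_m) - m\, \Pr{\xi = d-1}}{\sqrt{m}} \;\convdis\; \cN(0, \sigma_d^2),
\]
for a constant $\sigma_d^2 \in [0, \infty)$, the finiteness following from $\sigma^2 < \infty$. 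Substituting $m = n - K_n$ and using $K_n = O_p(1)$ one has $m\,\Pr{\xi = d-1} = n\,\Pr{\xi = d-1} + O_p(1)$ and $\sqrt{m} = \sqrt{n}\,(1 + o_p(1))$, so that $R_n$ together with these corrections is $o_p(\sqrt{n})$; conditioning on $K_n$ and using its independence from $(\cT_m)_m$ transfers the convergence in distribution to the random size $n - K_n$, and Slutsky's theorem then yields $(N_d - n\,\Pr{\xi = d-1})/\sqrt{n} \convdis \cN(0, \sigma_d^2)$.

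\emph{Main obstacle.} Essentially all of the difficulty is already absorbed into Theorem~\ref{te:main}, and the remainder is bookkeeping. The two points deserving care are the estimate $|B_r(\cT_m, o)| = O_p(1)$ in Part (1) --- equivalently, the statement that a uniformly random vertex of $\cU_n$ is unlikely to land close to where the small tree $\cV_n$ is grafted on, so that its neighbourhood statistics are those of $\cT_{n-K_n}$ --- and, in both parts, the routine but slightly delicate replacement of the deterministic size $m$ by the stochastically bounded, independent random size $n - K_n$ inside the cited limit theorems, which is achieved in each case by conditioning on $K_n$.
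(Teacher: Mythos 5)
Your proposal is correct and follows essentially the same route as the paper: reduce to $\cS_n$ via Theorem~\ref{te:main}, observe that a uniformly chosen vertex lies with high probability in $\cT_{n-K_n}$ and, moreover, at distance more than $r$ from the grafting point, so its $r$-neighbourhood agrees with the one in $\cT_{n-K_n}$, then invoke Aldous's Benjamini--Schramm limit and the degree CLT for conditioned Galton--Watson trees, handling the random size $n-K_n$ by conditioning on $K_n$. The only minor differences are cosmetic: the paper justifies $|B_r(\cT_m,o)|=O_p(1)$ by citing Janson's moment bound $\Ex{Z_\ell(\cT_m)^r}\le C\ell^r$ on generation sizes (rather than your appeal to root-local convergence), and in Part~2 you should note that the cited CLT already yields $\sigma_d>0$, whereas your phrasing $\sigma_d^2\in[0,\infty)$ does not by itself exclude degeneracy, which the theorem statement requires.
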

The distribution of $\cT^\bullet$ is made explicit in \cite[Remark 7.13, Remark 5.3]{MR2908619}. We very briefly recall the notion of local weak convergence used in Theorem~\ref{te:bslimit} and refer the reader to Curien's  notes \cite{clecture} for a more in-depth  treatment. 
A \emph{locally finite} graph may have infinitely many vertices, but each has finite degree. Any two such graphs $G$ and $H$ with distinguished vertices $v_G$ and $v_H$ are considered as \emph{isomorphic}, if there is a bijection between their vertex sets that preserves the incidence relation and maps the root vertices to each other. This is denoted by $(G, v_G) \simeq (H, v_H)$. For any $\ell \ge 0$, the subgraph consisting of all vertices with graph distance at most $\ell$ from $v_G$ is denoted by $U_\ell(G, v_G)$ and called the \emph{$\ell$-neighbourhood}. We consider the $\ell$-neighbourhood as a rooted graph. The \emph{local distance} of the two rooted graphs $H$ and $G$ is defined by
\[
	d_{\textsc{L}}( (G, v_G), (H,v_H) ) = 1 / (1 + \sup\{ \ell \ge 0 \mid U_\ell(G,v_G) \simeq U_\ell(H, v_H)\}).
\]
This yields a metric on the collection of (representatives of) isomorphism classes of rooted locally finite graphs, which is known to be complete and separable. A sequence of random rooted graphs $(G_n, v_n)$ converges in the \emph{local weak sense} toward a random limit graph $(G, v)$, if for each $\ell \ge 0$ and each rooted graph $(Q, v_Q)$ it holds that
\[
	\Pr{U_\ell(G_n, v_n) \simeq (Q, v_Q)} \to \Pr{U_\ell(G,v) \simeq (Q, v_Q)}
\]
as $n$ becomes large.

\subsection{The maximum degree and other parameters}
\label{sec:vdegree}
Theorem~\ref{te:main} reduces the study of the extremal vertex degree sizes of the unrooted tree $\cU_n$ to those of the random tree $\cS_n$. It is clear that central limit theorems or laws of large numbers for the degree sizes in the simply generated tree $\cT_m$ also hold for the tree $\cT_{K_n}$, since $K_n = n + O_p(1)$. The trees $\cT_{K_n}$ and $\cS_n$ differ only by a small tree with stochastically bounded size that gets attached to the root of $\cT_{K_n}$. Consequently, available results for the maximum degree of $(\cT_m)_{m \ge 1}$ automatically carry over to the random unrooted tree $\cU_n$. In particular, we obtain by recent results of Kortchemski~\cite[Thm. 1]{MR3335012} a central limit theorem for the largest degree in the subcritical setting.
\begin{corollary}
	Suppose that $\mu < 1$ and
	\[
		\Pr{\xi = k} = f(k) k^{-\beta}
	\]
	for some slowly varying function $f$ and a parameter $\beta>2$. Set \[\alpha=\min(2, \beta-1)\] and let $(Y_t)_{t\ge1}$ denote a spectrally positive L\'evy process with Laplace exponent
		\[
		\Ex{\exp(-\lambda Y_t)} = \exp(t \lambda^{\alpha}).
		\]
		Then there exists a slowly varying function $g$ such that the maximum degree $\Delta(\cU_n)$ satisfies
		\[
		\frac{(1-\nu)n - \Delta(\cU_n)}{g(n)n^{1/\alpha}} \convdis Y_1.
		\]
	The second largest degree $\Delta_2(\cU_n)$ satisfies
	\[
		\Delta_2(\cU_n) = O_p(g(n)n^{1/\alpha}).
	\]
\end{corollary}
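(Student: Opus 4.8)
The plan is to derive the corollary from Kortchemski's limit theorems for conditioned non-generic Galton--Watson trees \cite[Thm.~1]{MR3335012} by transporting them along the geometric approximation of Theorem~\ref{te:main}. Recall from \cite[Lem.~4.1]{MR2908619} that $\cT_m$ is a $\xi$-Galton--Watson tree conditioned on having $m$ vertices, and note that $\mu<1$ forces $\mu=\nu=\Ex{\xi}$, so that the heavy-tail hypothesis on $\xi$ places us exactly in the setting of \cite[Thm.~1]{MR3335012}. Writing $\Delta^+$ and $\Delta^+_2$ for the largest and second-largest outdegree, that result provides a slowly varying function $g$ with
\[
	\frac{(1-\nu)m-\Delta^+(\cT_m)}{g(m)\,m^{1/\alpha}}\convdis Y_1
	\qquad\text{and}\qquad
	\Delta^+_2(\cT_m)=O_p\!\big(g(m)\,m^{1/\alpha}\big)
\]
as $m\to\infty$ along the appropriate congruence class modulo $\spa(\mathbf{w})$.

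By Theorem~\ref{te:main} we have $d_{\textsc{TV}}(\cU_n,\cS_n)\le C\exp(-cn)$, so it suffices to prove both statements with $\cU_n$ replaced by $\cS_n$, the tree obtained from $\cT_{n-K_n}$ by attaching $\cV_n$ to its root through a single edge. In $\cS_n$ every vertex of $\cT_{n-K_n}$ has graph-degree equal to its outdegree plus one (the root of $\cT_{n-K_n}$ acquiring its extra edge from the link to $\cV_n$), while every vertex of $\cV_n$ has degree at most $|\cV_n|=K_n$. Hence
\[
	\Delta^+(\cT_{n-K_n})\;\le\;\Delta(\cS_n)\;\le\;\Delta^+(\cT_{n-K_n})+K_n+1,
\]
and similarly $\Delta_2(\cS_n)\le\max\!\big(\Delta^+_2(\cT_{n-K_n})+1,\,K_n\big)$. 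Since $K_n=O_p(1)$ and $g(n)n^{1/\alpha}\to\infty$, after dividing by $g(n)n^{1/\alpha}$ the term $K_n$ and the additive constant are negligible, so it remains to analyse $\Delta^+(\cT_{n-K_n})$ and $\Delta^+_2(\cT_{n-K_n})$.

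The next step is to remove the random size $m=n-K_n$. Because the tree $\cV_n$, and hence $K_n$, is independent of the family $(\cT_m)_m$, conditioning on $\{K_n=k\}$ replaces $\cT_{n-K_n}$ by an ordinary copy of $\cT_{n-k}$. For fixed $k$, slow variation of $g$ gives $g(n-k)(n-k)^{1/\alpha}/(g(n)n^{1/\alpha})\to1$ and $(1-\nu)(n-k)=(1-\nu)n+O(1)$, so by Slutsky the displayed limits persist with the normalisation $g(n)n^{1/\alpha}$ and the centring $(1-\nu)n$; in particular
\[
	\frac{(1-\nu)n-\Delta^+(\cT_{n-k})}{g(n)\,n^{1/\alpha}}\convdis Y_1
	\qquad\text{and}\qquad
	\frac{\Delta^+_2(\cT_{n-k})}{g(n)\,n^{1/\alpha}}=O_p(1).
\]
Averaging over $k$ against the weights $\Pr{K_n=k}$ is legitimate because $K_n$ is tight, so the contribution of $\{K_n>M\}$ is uniformly small in $n$ and can be made arbitrarily small by enlarging $M$; this transfers the weak limit and the stochastic boundedness to the random index $m=n-K_n$. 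Combining this with the sandwich of the previous paragraph (one more application of Slutsky) yields the central limit theorem for $\Delta(\cU_n)$ and the estimate $\Delta_2(\cU_n)=O_p(g(n)n^{1/\alpha})$.

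The only point requiring genuine care is this passage to the random size: one must verify that the normalising sequence $g(n)n^{1/\alpha}$ and the centring $(1-\nu)n$ may be used uniformly over all sizes $n-k$ with $k$ ranging over a large but fixed window, and that the resulting mixture over such $k$ does not destroy the weak limit. Both facts follow from slow variation of $g$ and from the tightness of $K_n$ supplied by Theorem~\ref{te:main}; everything else is routine bookkeeping around the planted-versus-graph degree conventions.
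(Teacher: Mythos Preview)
Your proposal is correct and follows exactly the approach the paper indicates: the paper does not give a formal proof of this corollary but states in the paragraph preceding it that the result follows by transporting Kortchemski's theorem \cite[Thm.~1]{MR3335012} for $(\cT_m)_m$ through the approximation of Theorem~\ref{te:main}, using that $\cS_n$ differs from $\cT_{n-K_n}$ only by a tree of stochastically bounded size. Your write-up makes this transfer explicit, including the degree--outdegree bookkeeping and the passage to the random index via tightness of $K_n$ and slow variation of $g$, which is precisely what the paper leaves implicit.
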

Informally speaking, pretty much all ``interesting" properties known for simply generated planted plane trees carry over to the random tree $\cU_n$ using the approximation in Theorem~\ref{te:main} (and possibly Lemma~\ref{le:step2}). See in particular Janson's comprehensive survey \cite{MR2908619} for a wealth of further results that may be transferred. 

\section{Preliminaries}
	\label{sec:preliminaries}

\subsection{Subexponential power series}

Subexponential sequences  were studied by Chover, Ney and Wainger \cite{MR0348393}, Embrechts \cite{MR714482}, and Embrechts and Omey~\cite{MR772907}. Up to tilting and rescaling, these sequences correspond to subexponential densities of random variables with values in a lattice, and hence belong to the context of heavy-tailed and subexponential distributions, see the book by Foss, Korshunov, and Zachary \cite{MR3097424}.  

\begin{definition}
	Let $d \ge 1$ be an integer. A power series $g(x) = \sum_{k \ge 0} g_k x^k$ with non-negative coefficients and radius of convergence $\rho >0$ is subexponential with span $d$, if $g_k=0$ whenever $k$ is not divisible by $d$, and
	\begin{align}
	\label{eq:condition}
	\frac{g_k}{g_{k+d}} \sim \rho^d, \qquad \frac{1}{g_k}\sum_{i+j=k}g_ig_j \sim 2 g(\rho) < \infty
	\end{align}
	as $k \equiv 0 \mod d$ becomes large.
\end{definition}
Following lemma is useful for deriving the behaviour of a randomly stopped random walk with i.i.d. steps (after centralizing the coefficients of $g(x)$ properly), and in our case it will prove handy as enumerative tool.
\begin{lemma}[{\cite[Thm. 4.8,~4.30]{MR3097424}}]
	\label{lem:subexp_composition}
	If $g(x)$ is subexponential with span $d$ and radius of convergence $\rho>0$, and $f(x)$ is a non-constant power series with non-negative coefficients that is analytic at $\rho$, then $f(g(x))$ is subexponential with span $d$ and radius of convergence $\rho$. Further, as $n \equiv 0 \mod d$ becomes large,
	\[
		[x^n] f(g(x)) \sim f'(g(\rho)) [x^n]g(x).
	\]
\end{lemma}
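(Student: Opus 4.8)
The plan is to derive this from the classical theory of subexponential lattice distributions as developed in \cite{MR3097424}, the lemma being essentially a generating-function reformulation of the results there on random sums of subexponential summands with a light-tailed number of terms. First I would dispose of the span. Writing $g(x) = \tilde g(x^d)$ with $\tilde g(y) = \sum_{m \ge 0} g_{md}\, y^m$, the two relations in \eqref{eq:condition} for $g$ are exactly the assertion that $\tilde g$ is subexponential with span $1$ and radius of convergence $\rho^d$, and $f(g(x)) = (f\circ\tilde g)(x^d)$, so that $[x^n]f(g(x)) = [y^{n/d}](f\circ\tilde g)(y)$ when $d \mid n$ and vanishes otherwise. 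Hence it suffices to prove the statement when $d = 1$.

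Next comes the exponential tilting. Put $q_k = g_k\rho^k$; then $\sum_k q_k = g(\rho) < \infty$ by \eqref{eq:condition}, and $p_k := q_k/g(\rho)$ is a probability distribution on $\ndN_0$ whose generating function is $g(\rho x)/g(\rho)$. The two conditions in \eqref{eq:condition} say precisely that $(p_k)$ is a subexponential lattice density in the sense of \cite{MR3097424}, i.e.\ $p_k/p_{k+1}\to 1$ and $\sum_{i+j=k}p_ip_j \sim 2p_k$. Expanding $f(y) = \sum_{j\ge0} f_j y^j$ and using $g(\rho x)^j = \sum_n (q^{*j})_n x^n$, where $q^{*j}$ is the $j$-fold convolution, one gets for $n \ge 1$
\[
[x^n] f(g(x)) \;=\; \rho^{-n}\sum_{j \ge 1} f_j\, (q^{*j})_n \;=\; \rho^{-n}\sum_{j\ge1} f_j\, g(\rho)^j\, (p^{*j})_n .
\]
The fundamental closure property of subexponential densities gives $(p^{*j})_n \sim j\,p_n$ as $n\to\infty$ for each fixed $j$; since $\rho^{-n}g(\rho)p_n = \rho^{-n}q_n = g_n$, summing termwise over $j$ yields
\[
[x^n]f(g(x)) \;\sim\; \rho^{-n} p_n \sum_{j\ge1} j f_j g(\rho)^j \;=\; g_n \sum_{j\ge1} j f_j g(\rho)^{j-1} \;=\; f'(g(\rho))\,[x^n]g(x),
\]
where $f'(g(\rho)) \in (0,\infty)$ because $f$ is analytic at $g(\rho)<\infty$ and non-constant with non-negative coefficients.

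The step that requires care — and the one I expect to be the main obstacle — is justifying the interchange of $\lim_n$ with the infinite sum over $j$ above. Here one invokes Kesten's bound: for a subexponential density $(p_k)$ and every $\varepsilon>0$ there is a constant $K_\varepsilon$ with $(p^{*j})_n \le K_\varepsilon (1+\varepsilon)^j p_n$ for all $n$ and $j$. Choosing $\varepsilon$ small enough that $g(\rho)(1+\varepsilon)$ lies inside the disc of convergence of $f$ — possible precisely because $f$ is analytic at $g(\rho)$ — the majorant series $K_\varepsilon\sum_{j\ge1} f_j g(\rho)^j(1+\varepsilon)^j$ converges, so dominated convergence for series applies and the termwise asymptotics above are legitimate.

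It remains to record that $f(g(x))$ is itself subexponential with span $1$ (hence span $d$ in general) and has radius of convergence $\rho$. The latter is immediate: $g(x)\uparrow g(\rho)<\infty$ as real $x\uparrow\rho$ while $g(x)=+\infty$ for real $x>\rho$, and $f$ is non-constant with non-negative coefficients, so $f(g(x))$ is finite exactly on $[0,\rho]$. For subexponentiality, note that the tilted sequence $r_n := \rho^n[x^n]f(g(x))$ is, by the computation above, the compound density $\sum_{j\ge1} f_j\, q^{*j}$, of finite total mass $f(g(\rho))$; this is exactly the setting of \cite[Thm.~4.8,~4.30]{MR3097424}, which gives that $(r_n)$ is subexponential. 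The ratio condition in \eqref{eq:condition} for $f(g(x))$ then follows from $[x^n]f(g(x))\sim f'(g(\rho))g_n$ together with $g_n/g_{n+1}\to\rho$, and the convolution condition is the subexponentiality of $(r_n)$ just obtained. This completes the plan.
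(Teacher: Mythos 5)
This lemma is not proved in the paper; it is stated with a citation to Foss, Korshunov and Zachary \cite{MR3097424}, so there is no internal proof to compare against. Your reconstruction follows the standard route in the cited book — reduce to span~$1$, exponentially tilt to a probability density $p_k = g_k\rho^k/g(\rho)$, use the classical asymptotic $(p^{*j})_n\sim j\,p_n$ for fixed $j$, and justify summing over $j$ by Kesten's bound together with analyticity of $f$ at $g(\rho)$ — and this is essentially how \cite[Thm.~4.8,~4.30]{MR3097424} are themselves established. The main computation and the convergence argument are correct.

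Two small points deserve care. First, your justification that $f(g(x))$ has radius of convergence $\rho$ via ``$g(x)=+\infty$ for real $x>\rho$'' is not valid in general: a power series with finite radius of convergence need not diverge to $+\infty$ beyond it, and the function may even extend analytically. The clean argument is the one your coefficient asymptotics already hand you: since $[x^n]f(g(x))\sim f'(g(\rho))\,[x^n]g(x)$ with $f'(g(\rho))\in(0,\infty)$, the coefficient sequences have the same exponential growth rate, so the radii of convergence agree. Second, you establish the ratio condition for $f(g(x))$ from the asymptotic equivalence, but for the convolution condition you fall back on citing the very theorems the lemma is attributed to, which is mildly circular as written. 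It can be closed without the citation: asymptotic tail equivalence with a subexponential sequence again yields subexponentiality (a standard closure property), or one can verify the convolution bound directly from Kesten's inequality applied to the compound density $r_n=\sum_{j\ge1}f_j(q^{*j})_n$. With those two repairs your argument stands on its own.
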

It was observed in \cite{{doi:10.1002/rsa.20771}} by building on  results for simply generated trees~\cite{MR2908619}, that power series satisfying a simple recursive relation are always subexponential up to a shift.

\begin{lemma}[{\cite[Lem. 3.3]{doi:10.1002/rsa.20771}}]
	\label{le:partf}
	Let $g(x)$ and $f(x)$ be power series with non-negative coefficients such that
	\[
		g(x) = x f(g(x)).
	\]
	If the series $f(x)$ has positive radius of convergence and satisfies $f(0) >0$ and $[x^k]f(x)>0$ for at least one $k \ge 2$, then $g(x)/x$ is subexponential with span $d$ for some $d \ge 1$.
\end{lemma}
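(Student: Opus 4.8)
The plan is to recognise $g(x) = x f(g(x))$ as the equation defining the generating function $g(x) = \sum_{m} |\scT_m|\,x^m$ of weighted planted plane trees for the weight-sequence $\mathbf{w} = ([x^k]f(x))_{k \ge 0}$, and then to read off the subexponentiality of $g(x)/x$ from the coefficient asymptotics of such series. The hypotheses say precisely that $\omega_0 = f(0) > 0$, that $\omega_k > 0$ for some $k \ge 2$, and that $f$ has positive radius of convergence, so that the construction in~\eqref{eq:psi_nu}--\eqref{eq:offsp} applies: with $\tau$, $\xi$ and $\mu = \min(\nu,1) \le 1$ as there, the series $g$ coincides, up to the substitution $x \mapsto (\Phi(\tau)/\tau)\,x$ and multiplication by the constant $\tau$, with the generating function $T(x) = \sum_n \Pr{|\cT| = n}\,x^n$ of the $\xi$-Galton--Watson tree $\cT$, and since $\cT$ is critical or subcritical, $T$ has radius of convergence $1$ with a singularity at $1$. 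Replacing the coefficients of a power series by $c\beta^n$ times themselves ($c,\beta > 0$) preserves both conditions in~\eqref{eq:condition} after the radius is rescaled accordingly; hence it suffices to prove that $T(x)/x$ is subexponential with radius $1$ and some span $d \ge 1$, i.e.\ that the sequence $(\Pr{|\cT| = n})_n$ is subexponential on the lattice on which it is supported.

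The span is elementary: since $\sum_{v \in T} d^+_T(v) = |T| - 1$ for planted plane trees $T$, Schur's lemma gives $[x^n]g(x) > 0$ precisely for $n \equiv 1 \bmod \spa(\mathbf{w})$ once $n$ is large, so $T(x)/x$ is supported on the non-negative multiples of $d := \spa(\mathbf{w}) \ge 1$. For the two analytic conditions in~\eqref{eq:condition} I would invoke the sharp asymptotics of $[x^n]g(x)$, equivalently of $\Pr{|\cT| = n}$, from~\cite{MR2908619}, which cover every regime: the generic (critical) case $\nu \ge 1$ with $\sigma^2 < \infty$, where $\Pr{|\cT| = n} \sim c\,n^{-3/2}$; the critical case with $\sigma^2 = \infty$, which adds a slowly varying correction to $n^{-1-1/\alpha}$; and the subcritical condensation case. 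In each one gets $\Pr{|\cT| = n} \sim \tilde L(n)$ for a regularly varying sequence $\tilde L$ with $\sum_n \tilde L(n) < \infty$, and such sequences are the prototypical subexponential ones: the ratio condition $\tilde L(n)/\tilde L(n+d) \to 1$ is immediate, while the self-convolution condition $\tilde L(n)^{-1}\sum_{i+j=n}\tilde L(i)\tilde L(j) \to 2\sum_k \tilde L(k)$ is exactly the defining property of a subexponential density of finite total mass, compare~\cite[Thm. 4.8,~4.30]{MR3097424}. Undoing the rescaling turns this into~\eqref{eq:condition} for $g(x)/x$ with radius $\rho_g$ and span $d$.

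The genuinely delicate point --- and the reason the conclusion only asserts subexponentiality ``with span $d$ for some $d$'', without an explicit asymptotic --- is the subcritical regime, where the behaviour of $\Pr{|\cT| = n}$ is governed by the singular part of $f$ at the finite limit $\lim_{x \nearrow \rho_g} g(x)$. Here no regularity can be transferred directly from $\mathbf{w}$, which is not assumed regularly varying or even subexponential; one must instead exploit that the recursive tree structure, via the single-big-jump description of a heavy-tailed sum $\xi_1 + \cdots + \xi_n$ conditioned to be atypically large, effectively averages the weights over a window whose length grows with $n$, which smooths out any oscillation and yields a genuinely regularly varying sequence $[x^n]g(x)$. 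Carrying this out is precisely the content of the subcritical case of the analysis in~\cite{MR2908619}, and is where essentially all the work lies.
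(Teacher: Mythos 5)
The paper does not prove this lemma; it is quoted verbatim from \cite[Lem.~3.3]{doi:10.1002/rsa.20771}, so there is no in-paper argument to compare against. Your reduction is correct as far as it goes: recognising $g$ as the weighted planted-tree series, tilting so that the coefficients of $g(x)/x$ become (up to a geometric factor) the sequence $\Pr{|\cT|=n}$ for a critical or subcritical $\xi$-Galton--Watson tree, noting that the two defining conditions of subexponentiality are stable under $g_n \mapsto c\beta^n g_n$, and extracting the span from Schur's lemma applied to $\{k : \omega_k>0\}$ are all sound and correspond to the standard normalisation.

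The gap is in the final step, where you claim that \cite{MR2908619} supplies asymptotics $\Pr{|\cT|=n}\sim\tilde L(n)$ with $\tilde L$ regularly varying ``in every regime''. This is not what that survey establishes, and it is not true under the stated hypotheses. The lemma imposes no regularity whatsoever on $\mathbf{w}$ beyond $\rho_\Phi>0$, $\omega_0>0$ and some $\omega_k>0$ with $k\ge2$, whereas Janson's asymptotics for $\Pr{|\cT|=n}$ in the critical infinite-variance case require $\xi$ to lie in a stable domain of attraction, and in the subcritical case require $\xi$ itself to have a regularly varying (or at least subexponential) tail --- and neither is implied by the hypotheses. Your own last paragraph concedes this for the subcritical regime, but the proposed repair is circular: the single-big-jump description you invoke (e.g.\ \cite[Thm.~4.8]{MR3097424}) is a theorem \emph{about} subexponential step distributions, so it cannot be used to establish subexponentiality of $(\Pr{|\cT|=n})_n$ when $\xi$ is not assumed subexponential in the first place, and the assertion that the conditioned random walk ``smooths out any oscillation'' into a genuinely regularly varying sequence is a heuristic, not a proof (nor, I believe, a true statement in full generality --- the lemma only claims subexponentiality, a strictly weaker conclusion than regular variation). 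You also omit the critical infinite-variance case outside any stable domain of attraction, which is likewise not covered by the cited asymptotics. In short, the easy (finite-variance critical) case goes through, but the hard cases --- the ones the lemma is actually designed to handle --- are not addressed by an appeal to regular variation, and a genuinely different argument (as in the cited reference) is required.
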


\begin{lemma}
	\label{le:coeff_comparison_subexp}
Let $g(x) = \sum_{k \ge 0}g_k x^k$ and $f(x) = \sum_{k \ge 0}f_k x^k$ be power series with non-negative coefficients. Suppose that the radii of convergence $\rho_g$ and $\rho_f$ of $g(x)$ and $f(x)$ satisfy $\rho_g > \rho_f>0$. If $d \ge 1$ is an integer with
\[
	\frac{f_n}{f_{n+d}} \sim \rho_f^d
\]
as $n \equiv 0 \mod d$ becomes large, then there exist constants $C,c>0$ such that
\[
	\frac{g_n}{f_n} \le C \exp(-cn)
\]
holds for all $n \equiv 0 \mod d$ with $f_n>0$.
\end{lemma}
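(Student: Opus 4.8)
The plan is to exploit the gap between the radii of convergence $\rho_g>\rho_f$ by sandwiching the coefficients of $f$ and $g$ between geometric sequences with strictly separated growth rates. First I would fix two auxiliary radii $r_1,r$ with $\rho_f<r_1<r<\rho_g$, which is possible precisely because $\rho_f\neq\rho_g$. For the lower bound on $f_n$, rewrite the hypothesis $f_n/f_{n+d}\sim\rho_f^d$ as $f_{n+d}/f_n\to\rho_f^{-d}$ along $n\equiv0\bmod d$; note that this asymptotic already forces $f_n>0$ for all sufficiently large $n$ in this congruence class. Since $\rho_f<r_1$ we have $\rho_f^{-d}>r_1^{-d}$, so there is an index $n_1\equiv0\bmod d$ with $f_{n_1}>0$ beyond which $f_{n+d}/f_n\ge r_1^{-d}$. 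Iterating this inequality from $n_1$ gives $f_n\ge c_1 r_1^{-n}$ for all $n\ge n_1$ with $n\equiv0\bmod d$, where $c_1=f_{n_1}r_1^{n_1}>0$.

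For the upper bound on $g_n$, I would use that $r<\rho_g$ implies $\sum_{k\ge0}g_kr^k<\infty$, hence $g_kr^k\to0$ and in particular $C_2:=\sup_{k\ge0}g_kr^k<\infty$, so that $g_n\le C_2 r^{-n}$ for every $n\ge0$. Combining the two estimates, for every $n\ge n_1$ with $n\equiv0\bmod d$ (so that $f_n>0$) we obtain
\[
\frac{g_n}{f_n}\le\frac{C_2}{c_1}\left(\frac{r_1}{r}\right)^{\!n},
\]
which is of the claimed form with $c:=\log(r/r_1)>0$ and initial constant $C_2/c_1$. There remain only finitely many $n<n_1$ with $n\equiv0\bmod d$ and $f_n>0$, and for each such $n$ the quotient $g_n/f_n$ is a fixed finite number; replacing $C_2/c_1$ by a larger constant $C$ absorbs these finitely many cases and yields $g_n/f_n\le C\exp(-cn)$ for all admissible $n$.

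I do not expect a genuine obstacle here: the statement is an elementary coefficient comparison, and the hypothesis $\rho_g>\rho_f$ together with the ratio asymptotics for $f$ does essentially all the work. The only points requiring a little care are justifying that $f_n>0$ for all large $n$ in the prescribed congruence class (so that the iteration and the quotient are meaningful), and handling the finitely many small exceptional indices by enlarging the multiplicative constant.
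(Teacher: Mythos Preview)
Your proof is correct and follows essentially the same approach as the paper: both iterate the ratio hypothesis to get a geometric lower bound $f_n\ge c_1 r_1^{-n}$ with $r_1>\rho_f$, and use convergence of $g$ at some $r<\rho_g$ to get a geometric upper bound on $g_n$. The only cosmetic difference is packaging---the paper combines the two estimates by showing that $\sum_n (g_n/f_n)(1+\delta)^n<\infty$ for small $\delta>0$, whereas you divide the pointwise bounds directly; your version is arguably the more transparent of the two.
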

\begin{proof}
The assumption $f_n / f_{n+d} \sim \rho_f^d$ implies that the radius of convergence of $f(x)$ does not change if we restrict to coefficients $f_n$ with index in the lattice $d \ndN_0$. It also implies that $f_{kd}>0$ for all but finitely many integers $k \ge 0$. Hence without loss of generality we may assume that $d=1$ and $f_n >0$ for all $n \ge 0$.

Our assumption on $f(x)$ now reads $f_n/f_{n+1} \sim \rho_f$. Hence for any $\delta>0$ there is an integer $n_0\ge 0$ such that
\[
	\frac{f_{n_0}}{f_{n_0 + k}} \le (1 + \delta)^k \rho_f^k
\]
for all $k \ge 0$. Thus
\begin{align*}
	\sum_{n\ge 0} \frac{g_n}{f_n}(1+\delta)^n
	&= \sum_{0\le n < n_0} \frac{g_n}{f_n}(1+\delta)^n
	+ (1+\delta)^{n_0} \sum_{k\ge 0} 
	\frac{g_{n_0+k}}{f_{n_0+k}} (1+\delta)^{k} \\
	&\le \sum_{0\le n < n_0} \frac{g_n}{f_n}(1+\delta)^n
	+ \frac{(1+\delta)^{n_0}}{f_{n_0}} \sum_{k\ge 0} 
	g_{n_0+k} \left( (1 + \delta)^2 \rho_f \right)^{k}.
\end{align*}
Since $\rho_g > \rho_f$ we may choose $\delta>0$ small enough such that $(1 + \delta)^2 \rho_f < \rho_g$. With this choice of $\delta$ it follows that
\[
\sum_{n\ge 0} \frac{g_n}{f_n}(1+\delta)^n < \infty.
\]
Consequently, there are constants $C,c>0$ such that
\[
\frac{g_n}{f_n} \le C \exp(-cn)
\]
for all $n \ge 0$.
\end{proof}

\subsection{Combinatorial Classes}
	\label{sec:comb_classes}

In order to understand how symmetries of random unrooted plane trees typically behave, we will make use of some enumerative results. We aim to recall just enough combinatorial background for the non-specialist.  A comprehensive survey of the theory is given in the books \cite{MR2483235, MR1629341}.

A \emph{weighted combinatorial class} is given by a set $\scC^{\omega}$ of countably many objects equipped with a size function \[\lvert\cdot\rvert:\scC^\omega\to\ndN_0\] and a weight function \[\omega:\scC^\omega\to\ndR_{\ge0}.\] Additionally, the subset $\scC^{\omega}_n \subset \scC^{\omega}$ containing all $n$-sized objects in $\scC^{\omega}$ is required to satisfy \[\sum_{C \in \scC^\omega_n} \omega(C) < \infty\] for all $n\in\ndN_0$. Each $C\in\scC^{\omega}_n$ is said to be comprised of $n$ \emph{atoms}. There are two cases to be set apart:
\begin{enumerate}
\item Each atom bears a distinct label and the class is termed \emph{labelled}.
 Then the \emph{(weighted exponential) generating series} of $\scC^{\omega}$ is the formal power series
\[
	\scC^{\omega}(x)
	:= \sum_{C\in\scC^{\omega}} \frac{\omega(C)}{\lvert C\rvert!} x^{\lvert C \rvert}.
\]
\item Whenever atoms are not distinguishable, we say that the class is \emph{unlabelled}. For this section we denote such classes by $\tilde{\scC}^{\omega}$ and the resulting \emph{(weighted ordinary) generating series} is defined as the formal power series
\[
	\tilde{\scC}^{\omega}(x)
	:= \sum_{C\in\scC^{\omega}} \omega(C) x^{\lvert C \rvert}.
\]
\end{enumerate}
Without loss of generality we will assume that any object $C\in\scC^{\omega}_n$ has labels in the set $[n]$ and that $\scC^{\omega}$ is defined in a coherent way such that it contains any possible relabelling of $C$ with labels in $[n]$. For $\sigma:[n]\to[n]$ and $C\in\scC^{\omega}_n$ we denote by $\sigma.C$ the object obtained by replacing the label $i_v\in[n]$ of the atom $v$ of $C$ by $\sigma(i_v)$ for all atoms $v\in C$. \\
We may alternatively view $\tilde{\scC}^{\omega}$ as the set of equivalence classes under the relation which terms two objects $C,C'\in\scC^{\omega}_n$ \emph{isomorphic} if and only if one object is obtained by relabelling the other one, i.e. there exists a permutation $\sigma:[n]\to[n]$ such that $\sigma.C = C'$. Regarding this, it makes sense to impose that any labelled element in a equivalence class $\tilde{C}\in\tilde{\scC}^{\omega}$ receives the same weight $\omega(\tilde{C})$.
\subsubsection{Cycle index sum}
Let $C\in\scC^{\omega}$ and $\sigma$ be a permutation on the label set of $C$ such that $\sigma.C=C$. Then $\sigma$ is called \emph{automorphism} of $C$. Note that any object has at least one automorphism, namely the identity. The class of \emph{symmetries} of $\scC^{\omega}$ is defined as the collection of objects paired with an automorphism, that is
\[
	\mathrm{Sym}(\scC^{\omega})
	:= \{ (C,\sigma) : C\in\scC^{\omega}\text{ and } \sigma.C = C\}.
\]
Any permutation $\sigma$ can be decomposed into disjoint cycles and we denote by $\sigma_i$ the number of cycles of length $i$ in $\sigma$. In particular, $\sigma_1$ counts the number of fixpoints. The \emph{cycle index sum} of a class $\scC^{\omega}$ is then defined as the formal power series 
\[
	Z_{\scC^{\omega}}(s_1,s_2,\dots)
	:= \sum_{k\ge 0}\sum_{(C,\sigma)\in\mathrm{Sym}(\scC^{\omega}_k) }
	\frac{\omega(C)}{k!} s_1^{\sigma_1}
	\cdots s_k^{\sigma_k}.
\]	
Considering symmetries is useful, as it provides a way of counting orbits.
\begin{lemma}[{\cite[Lem. 1]{MR2810913}}]
\label{le:unlabelled_n!_symmetries_and_ogs_in_cycle_index_sum}
	For any $\tilde{C}\in\tilde{\scC}^{\omega}$ there are precisely $\lvert C\rvert!$ many symmetries $(C,\sigma)$ in $\mathrm{Sym}(\scC^{\omega})$ such that $C\in\tilde{C}$. Consequently,
\[
	\tilde{\scC}^{\omega}(x)
	= Z_{\scC^{\omega}}(x,x^2,x^3,\dots).
\]
\end{lemma}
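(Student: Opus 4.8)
The plan is to obtain this as the cycle-index-sum incarnation of the orbit--stabilizer theorem (equivalently, of Burnside's lemma). First I would fix an isomorphism class $\tilde{C} \in \tilde{\scC}^\omega$, say of size $n$, and choose a representative $C_0 \in \tilde{C}$. By the coherence assumption on $\scC^\omega$, every labelled object isomorphic to $C_0$ has label set $[n]$ and lies in $\scC^\omega_n$, so the symmetric group $S_n = \mathrm{Sym}([n])$ acts on $\scC^\omega_n$ by $\sigma \mapsto \sigma.C$, and the orbit of $C_0$ under this action is precisely $\tilde{C}$. The stabilizer of $C_0$ is exactly its automorphism group $\mathrm{Aut}(C_0) = \{\sigma \in S_n : \sigma.C_0 = C_0\}$, whence $|\tilde{C}|\cdot|\mathrm{Aut}(C_0)| = n!$.

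Next I would note that $|\mathrm{Aut}(\cdot)|$ is constant on $\tilde{C}$: if $C = \sigma.C_0$ then $\mathrm{Aut}(C) = \sigma\,\mathrm{Aut}(C_0)\,\sigma^{-1}$, a conjugate subgroup of the same cardinality. Summing over the orbit,
\[
  \bigl|\{(C,\sigma) \in \mathrm{Sym}(\scC^\omega) : C \in \tilde{C}\}\bigr| = \sum_{C \in \tilde{C}} |\mathrm{Aut}(C)| = |\tilde{C}|\cdot|\mathrm{Aut}(C_0)| = n!,
\]
which is the first assertion. For the generating-series identity I would then substitute $s_i = x^i$ in the cycle index sum. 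For a symmetry $(C,\sigma)\in\mathrm{Sym}(\scC^\omega_k)$ one has $\sum_{i\ge 1} i\,\sigma_i = k$, since $\sigma$ permutes a $k$-element set, so the monomial $s_1^{\sigma_1}\cdots s_k^{\sigma_k}$ becomes $x^k$ and
\[
  Z_{\scC^\omega}(x,x^2,x^3,\dots) = \sum_{k\ge 0}\frac{x^k}{k!}\sum_{(C,\sigma)\in\mathrm{Sym}(\scC^\omega_k)}\omega(C).
\]
Finally, grouping the inner sum by isomorphism classes, using that $\omega$ is constant on each class $\tilde{C}$ (equal to $\omega(\tilde{C})$) and that, by the first part, exactly $k!$ of the symmetries $(C,\sigma)$ satisfy $C\in\tilde{C}$, the factors of $k!$ cancel and the double sum collapses to $\sum_{\tilde{C}\in\tilde{\scC}^\omega_k}\omega(\tilde{C})$; summing over $k$ then yields $\tilde{\scC}^\omega(x)$.

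I do not expect a genuine obstacle here — this is the classical reformulation of Burnside's lemma in the language of cycle index sums, and a proof is already available in \cite{MR2810913}. The only points needing a word of care are the well-definedness of $\omega$ on isomorphism classes (part of the running hypothesis of this subsection) and the conjugacy-invariance of $|\mathrm{Aut}(\cdot)|$; the combinatorial bookkeeping in the substitution $s_i = x^i$ is then routine, so I would keep the write-up short.
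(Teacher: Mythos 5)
Your proof is correct; it is the standard orbit--stabilizer (Burnside) argument, which is the argument given in the cited source \cite{MR2810913}. The paper itself states this lemma as a citation without reproducing a proof, so there is nothing in the paper's own text to compare against; your write-up, including the conjugacy-invariance of $|\mathrm{Aut}(\cdot)|$ and the substitution $s_i = x^i$ collapsing each monomial to $x^k$, is exactly what a self-contained proof should look like.
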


\begin{example}
In the context of our intended applications, where we consider a weight sequence $\mathbf{w} = (\omega_i)_{i \ge 0}$ of non-negative real numbers, $\scT^{\omega}$ is typically a class (of labelled or unlabelled, planted or plane, rooted or unrooted) trees and, for $T\in\scT^{\omega}$,
\[
	\omega(T)
	:= \prod_{v\in T} \omega_{f(v)}
\]
for some arbitrary mapping $f$ from the vertex set of $T$ into $\ndN_0$. For example $f(v):=d^+_T(v)$ may count the out-degree of a given vertex $v$. Furthermore, the size of a tree will be given by its number of vertices.
\end{example}
\subsubsection{Constructions}
In this section we present several constructions designed for obtaining more complex classes out of simpler ones. Let $\scA^{\omega}$ and $\scB^{\nu}$ be labelled classes and their unlabelled counterparts be denoted as before. For any arbitrary class $\scC^\omega$, we define the set $\scC^\omega[P]$, which contains all objects in $\scC^\omega$ of size $\lvert P\rvert$ relabelled canonically by a given finite set of labels $P\subset\ndN$. 
\subsubsection*{Product}
The \emph{product} of objects $A \in \scA^\omega$ and $B \in \scB^\nu$ is the tuple $(A,B)$ where each atom is relabelled according to labels in $\lvert A \rvert + \lvert B \rvert$. Formally,
\[
	\scA^{\omega} \cdot \scB^\omega
	:= \{ (A,B) : A \in \scA^\omega[P_1],~ B \in \scB^\nu[P_2],~ (P_1,P_2)
	\text{ is a partition of } [\lvert A \rvert + \lvert B \rvert] \}.
\]
By discarding the labels, the product of the \emph{unlabelled} classes $\tilde{\scA}^{\omega}$ and $\tilde{\scB}^{\nu}$ is given by the set theoretic product
\[
	\tilde{\scA}^{\omega}\cdot \tilde{\scB}^{\nu} 
	:= \{ (A,B) : A\in \tilde{\scA}^{\omega},~ B\in \tilde{\scB}^{\nu}\}.
\]
In both cases, the size function is the canonical extension $\lvert (A,B) \rvert := \lvert A \rvert + \lvert B \rvert$ of the size functions of the underlying classes, such as the weight of $(A,B)$ being given by $\omega(A)\cdot \nu(B)$. As a straight-forward consequence the generating series satisfy
\begin{align}
	\label{eq:gen_fct_product}
	(\scA^\omega \cdot \scB^\nu)(x)
	= \scA^\omega(x) \cdot \scB^\nu (x)
	\quad
	\text{and}
	\quad
	(\tilde{\scA}^{\omega}\cdot \tilde{\scB}^{\nu})(x)
	= \tilde{\scA}^{\omega}(x) \cdot \tilde{\scB}^{\nu}(x).
\end{align}
\subsubsection*{Composition}
For this construction we assume that $\scB^{\nu}_0 = \emptyset$. The composition $\scA^{\omega} \circ \scB^{\nu}$ is the set of objects obtained by picking $A\in\scA^{\omega}$ and replacing every atom in $A$ by an entire object from $\scB^{\nu}$ relabelled properly according to the compound size.
In other words, $\scA^{\omega} \circ \scB^{\nu}$ contains all sequences of the form
\[
	(A,B_1,\dots,B_k)_{\simeq},
	\quad
	A\in \scA^{\omega}_k,~
	B_i \in \scB^{\nu}[P_i],~
	1\le i\le k,
\]
where $k\ge 0$ and $(P_1,\dots,P_k)$ is a partition of $[\sum_{1\le i \le k}\lvert B_i \rvert]$. The relation ``$\simeq$" terms two sequences $(A,B_1,\dots,B_k)$ and $(A',B_1',\dots,B_k')$ in $\scA^{\omega} \circ \scB^{\nu}$ isomorphic if $A=A'$ and for any permutation $\sigma:[k]\to[k]$ such that $\sigma.A=A$ it holds $B_{\sigma(i)}=B_i'$ for $1\le i\le k$. Hence, any $M\in\scA^{\omega} \circ \scB^{\nu}$ possesses a \emph{core structure} $A\in\scA^{\omega}$ and \emph{components} $B_1,\dots,B_{\lvert A\rvert}\in\scB^{\nu}$. The size is then given by $\lvert M \rvert := \sum_{1\le i\le \lvert A \rvert}\lvert B_i \rvert$ and the compound weight by $\omega(A)\prod_{1\le i\le \lvert A \rvert} \nu(B_i)$. This implies that the generating series is
\[
	(\scA^{\omega}\circ\scB^{\nu})(x) 
	= \scA^{\omega}(\scB^{\nu}(x)).
\]
The unlabelled composition $\tilde{\scA}^{\omega} \circ \tilde{\scB}^{\nu}$ is defined equivalently, except for skipping the part where the objects are relabelled. Due to possible symmetries appearing when removing the labels, the generating series satisfies a more complex formula, namely
\begin{align}
	\label{eq:cycle_index_sum_comp}
	(\tilde{\scA}^{\omega} \circ \tilde{\scB}^{\nu})(x)
	= Z_{\scA^{\omega}}(\tilde{\scB}^{\nu}(x),\scB^{\nu^2}(x^2),\scB^{\nu^3}(x^3),\dots),
\end{align}
where $\scB^{\nu^k}$ denotes the class with weight function $\nu(B)^k$ for $B\in\scB^{\nu^k}$. This is stated in \cite[Prop. 11]{MR1629341}.
\subsubsection*{Cycle}
The labelled class $\Cyc^{\kappa}$ contains all bijections that are cycles and by convention the empty set is mapped onto itself. The size of a cycle is its length and the weight of a cycle $\tau\in\Cyc^{\kappa}$ depends only on the length, i.e. $\kappa(\tau)=\kappa_{\lvert\tau\rvert}$ for some non-negative real-valued sequence $(\kappa_k)_{k\ge 0}$ with $\kappa_0=1$. This follows from the fact that all cycles of the same length are isomorphic.\\
The symmetries of cycles may be described explicitly. It is elementary that any automorphism of a cycle $\tau$ with length $k\ge 1$ must be of the form $\tau^i$ for $0\le i \le k-1$. The disjoint cycles in $\tau^i$ all have the same length $k / \gcd(i,k)$, and $\tau^i$ has $\gcd(i,k)$ of them. There are precisely $(k-1)!$ cycles on a fixed $k$-element set. This yields
\begin{align}
\label{eq:weighted0}
	Z_{\Cyc^\kappa}(s_1, s_2, \ldots) 
	= 1 + \sum_{k \ge 1} \frac{\kappa_k}{k} \sum_{i=0}^{k-1} s_{k / \gcd(i,k)}^{\gcd(i,k)}.
\end{align}
We let $\varphi$ denote Euler's totient function. That is, for each $d \ge 1$ the number $\varphi(d)$ counts the integers that are relatively prime to $d$. For each $k \ge 1$, there are precisely $\varphi(d)$ many integers $1 \le i \le k$ such that $k / \gcd(i,k) = d$. Hence Equation~\eqref{eq:weighted0} may be rephrased by
\begin{align}
	\label{eq:weighted1}
	Z_{\Cyc^\kappa}(s_1, s_2, \ldots) 
	= 1 + \sum_{d \ge 1} \frac{\varphi(d)}{d} \sum_{j \ge 1} \frac{\kappa_{jd}}{j} s_d^j.
\end{align}
We denote by the composition $\Cyc^{\kappa}\circ \scB^{\nu}$ the labelled class containing all cyclic orderings of objects in $\scB^{\nu}$ relabelled properly according to their compound size. \\
Formally, $\Cyc^{\kappa}\circ \scB^{\nu}$ contains all sequences of the form
\[
	(C,B_1,\dots,B_k)_{\simeq}, 
	\quad C\in\Cyc^{\kappa}_k,~ B_i \in \scB^{\nu}[P_i], ~1\le i\le k,
\]
where $(P_1,\dots,P_k)$ is a partition of $[\sum_{1\le i\le k}\lvert B_i \rvert]$ and the relation ``$\simeq$" is as before. The size of such an object is given by $\sum_{1\le i\le k}\lvert B_i \rvert$ and the weight by $\kappa(C)\prod_{1\le i\le k}\nu(B_i)$.

\subsection{Boltzmann distribution}
Given the (labelled) weighted combinatorial class $\scC^{\omega}$ and a parameter $y$ such that $0<\scC^{\omega}(y)<\infty$ we define the corresponding Boltzmann probability measure by
\[
	\Pr{\cC = C} 
	:= \frac{\omega(C)y^{\lvert C\rvert}}{\lvert C\rvert!\scC^{\omega}(y)},
	\quad C\in\scC^{\omega}.
\]
The random variable $\cC$ is taking values in the entire space $\scC^{\omega}$ and when conditioning on having size $n$ we obtain the object $\cC_n$ drawn proportional to its $\omega$-weight from all objects in $\scC^{\omega}$ of size $n$. That is
\[
	\Pr{\cC_n = C}
	= \frac{w(C)}{\sum_{C'\in\scC^{\omega}_n} w(C')},
	\quad
	C \in \scC^{\omega}_n.
\]
Let $\scX^2$ denote the unique class consisting of a single ordered pair of atoms that receives weight 1 and $\scB^{\nu}$ be a labelled class such that $\scB^{\nu}_0 =\emptyset$. Denote by $\cX_n$ the $(\scX^2\circ\scB^{\nu})_n$-valued random variable drawn proportional to its compound weight. Later we will need the distribution of the remainder $\cR_n$, which is obtained by removing ``the" largest component of $\cX_n$ (if the two components of $\cX_n$ are equally sized, we pick an arbitrary one). Therefore, we state a far simplified version of \cite[Thm. 3.4]{doi:10.1002/rsa.20771} (here $\scX^2$ is replaced by an arbitrary combinatorial class).
\begin{lemma}
\label{lem:giant_component}
Suppose that $x^{-1} \scB^{\nu}(x)$ is subexponential with radius of convergence $\rho$ and span $d$. Then
\[
	\cR_n \overset{d}{\to} \cR
\]
as $n\equiv 2\mod d$ becomes large, where $\cR$ is given by
\[
	\Pr{\cR = B} 
	= \frac{\nu(B) \rho^{\lvert B \rvert}}{\lvert B \rvert ! \scB^{\nu}(\rho)},
	\quad
	B\in\scB^{\nu}.
\]
\end{lemma}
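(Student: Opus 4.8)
The plan is to determine the law of $\cR_n$ in closed form and then extract its limit from Lemma~\ref{lem:subexp_composition}; I note in passing that the statement is exactly the special case of \cite[Thm.~3.4]{doi:10.1002/rsa.20771} in which the core class is $\scX^2$, so one could also simply cite that result. First I would unwind the construction. Since $\scX^2$ consists, up to relabelling, of a single \emph{ordered} pair of atoms, the composition $\scX^2 \circ \scB^{\nu}$ is essentially the class of ordered pairs $(B_1,B_2)$ with $B_1,B_2 \in \scB^{\nu}$; hence $(\scX^2\circ\scB^{\nu})(x) = \scB^{\nu}(x)^2$, and $\cX_n$ is an ordered pair $(B_1,B_2)$, with label sets forming an ordered partition of $[n]$ and $|B_1|+|B_2| = n$, sampled with probability proportional to $\nu(B_1)\nu(B_2)$. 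Put $g(x) = x^{-1}\scB^{\nu}(x) = \sum_k g_k x^k$, which by hypothesis is subexponential with radius of convergence $\rho$ and span $d$. In particular $g(\rho) < \infty$, the coefficients $g_k$ are supported on, and eventually strictly positive along, the lattice $d\ndN_0$, so $\scB^{\nu}$ is supported on sizes $\equiv 1 \mod d$ and $(\scX^2\circ\scB^{\nu})_n$ carries positive total weight only for $n \equiv 2 \mod d$ (and $n$ large), which matches the range in the statement.

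Next I would fix $B \in \scB^{\nu}$ with $|B| = b$; we may assume $\nu(B) > 0$, since otherwise $\Pr{\cR_n = B} = \Pr{\cR = B} = 0$, and then $g_{b-1} > 0$ forces $b \equiv 1 \mod d$. For every $n > 2b$ the smaller component of $\cX_n$ is precisely the one of size $b$, and counting the ordered pairs whose size-$b$ component is the canonical relabelling of $B$ gives the exact identity
\[
	\Pr{\cR_n = B} = \frac{2\,\nu(B)}{b!}\cdot\frac{[x^{n-b}]\scB^{\nu}(x)}{[x^{n}]\scB^{\nu}(x)^2} = \frac{2\,\nu(B)}{b!}\cdot\frac{g_{n-b-1}}{[x^{n-2}]g(x)^2}.
\]
Now Lemma~\ref{lem:subexp_composition}, applied to $g$ and $f(y) = y^2$ (so that $f'(g(\rho)) = 2g(\rho)$), yields $[x^{n-2}]g(x)^2 \sim 2g(\rho)\,g_{n-2}$ as $n-2 \equiv 0 \mod d$ grows, while iterating the estimate $g_k/g_{k+d} \sim \rho^d$ the fixed number $(b-1)/d$ of times yields $g_{n-b-1}/g_{n-2} \to \rho^{\,b-1}$. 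Combining these with $g(\rho) = \scB^{\nu}(\rho)/\rho$ gives, for each fixed $B$,
\[
	\Pr{\cR_n = B} \;\longrightarrow\; \frac{\nu(B)\,\rho^{\,b-1}}{b!\,g(\rho)} = \frac{\nu(B)\,\rho^{\,|B|}}{|B|!\,\scB^{\nu}(\rho)} = \Pr{\cR = B}.
\]

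Finally I would upgrade this pointwise convergence of probability weights to convergence in distribution — in fact in total variation, as $\scB^{\nu}$ is countable — by Scheff\'e's lemma. This requires only that the limiting weights sum to $1$, which holds because $\sum_{B \in \scB^{\nu}} \nu(B)\rho^{|B|}/|B|! = \scB^{\nu}(\rho) < \infty$, the finiteness being contained in the subexponentiality hypothesis (indeed $\scB^{\nu}(\rho) = \rho\, g(\rho) < \infty$). The tie-breaking convention is immaterial, since for each fixed $B$ it affects only the single value $n = 2b$. The combinatorial bookkeeping behind the exact formula for $\Pr{\cR_n = B}$ and the closing Scheff\'e step are routine; the one point that needs care — and where essentially all the strength of the hypothesis is spent — is the asymptotic evaluation of the denominator, i.e.\ the application of Lemma~\ref{lem:subexp_composition} to $g(x)^2$ together with the bounded iteration of the ratio estimate $g_k/g_{k+d} \sim \rho^d$.
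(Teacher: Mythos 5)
Your proof is correct and complete. The paper itself gives no argument for this lemma: it is stated as ``a far simplified version of \cite[Thm.~3.4]{doi:10.1002/rsa.20771}'' and the reader is simply referred to that source, so the only verification in the paper is the citation. You, by contrast, prove the statement directly from the hypotheses. Exploiting the triviality of the core class $\scX^2$, you write down the exact formula
\[
\Pr{\cR_n = B} \;=\; \frac{2\nu(B)}{b!}\cdot\frac{g_{n-b-1}}{[x^{n-2}]g(x)^2},\qquad g(x)=x^{-1}\scB^\nu(x),
\]
evaluate the denominator asymptotically via Lemma~\ref{lem:subexp_composition} with $f(y)=y^2$, handle the numerator by iterating the ratio estimate $g_k/g_{k+d}\sim\rho^d$ a fixed $(b-1)/d$ times, and close the argument with Scheff\'e's lemma after checking that the limiting weights sum to $\scB^\nu(\rho)/\scB^\nu(\rho)=1$. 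All of this is sound; the tie-breaking remark and the observation that the support of $(\scX^2\circ\scB^\nu)_n$ lies on $n\equiv 2\bmod d$ are also correct. The trade-off between the two routes is clear: the paper's citation is shorter but opaque and relies on a more general theorem about Gibbs partitions with arbitrary core class, while your direct computation is self-contained, makes the role of the subexponentiality hypothesis explicit, and confirms that the mode of convergence is in fact total variation, not merely distributional.
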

This implies that, letting $n$ tend to infinity on a properly chosen lattice, there emerges one giant component in $\cX_n$ containing all but a stochastically bounded number of atoms.

\section{Proof of the main result}
	\label{sec:proof}
We briefly recapitulate the assumptions as well as the notation needed for the entire proof. \\
Our only restriction is that a given weight sequence $\mathbf{w}=(\omega_k)_{k\ge 0}$ is such that $\omega_0>0$ and $\omega_k>0$ for some $k\in\ndN$. Further, $\Phi(z) := \sum_{k\ge 0} \omega_k z^k$  is assumed to have finite radius of convergence $\rho_{\Phi}>0$. \\
For the remaining proof section it will prove handy to transition into the language of combinatorial classes introduced in Section \ref{sec:comb_classes}. More precisely, we denote by $L(\scU)^{\omega}$ and $L(\scT)^{\bar{\omega}}$ the classes of labelled unrooted plane trees and of labelled planted trees, respectively, where $\omega$ and $\bar{\omega}$ are the  weight functions given by
\[
	\omega(T) = \prod_{v\in T} \omega_{d^+_T(v)} 
	\quad
	\text{or}
	\quad
	\overline{\omega}(T) = \prod_{v\in T} \omega_{d_T(v)-1}
\]
for any arbitrary tree $T$.
With this at hand we define the unlabelled counterparts $\scU^{\omega}=\widetilde{L(\scU)^{\omega}}$ and $\scT^{\bar{\omega}}=\widetilde{L(\scT)^{\bar{\omega}}}$. \\
For an arbitrary class (labelled or unlabelled) $\scC^{\nu}$ we denote by $\cC_n\sim\scC^{\nu}$ the random variable drawn from all objects in $\scC^{\nu}$ of size $n$ proportional to its $\nu$-weight, i.e.
\[
	\Pr{\cC_n = C} 
	=\frac{\nu(C)}{\sum_{C'\in\scC^{\nu}_n} \nu(C')},
	\quad
	C\in\scC^{\nu}_n.
\]
In this fashion let 
\begin{enumerate}
\item $\cU_n\sim\scU^{\overline{\omega}}$ be drawn proportional to its $\overline{\omega}$-weight from all unlabelled unrooted plane trees of size $n$,
\item $\cT_n\sim\scT^{\overline{\omega}}$ proportional to its $\overline{\omega}$-weight and
\item $\cT_n^*\sim\scT^{\omega}$ proportional to its $\omega$-weight from all unlabelled planted trees  of size $n$.
\end{enumerate}

\subsection{Galton-Watson trees and tilting}
Recall the notation from Equations~\eqref{eq:psi_nu}--\eqref{eq:sigma}. For any plane tree $T$ it holds that
\begin{align}
\label{eq:taq}
\Pr{\cT = T} = \frac{\omega(T) \rho_{\scT^\omega}^{|T|}} {\scT^\omega(\rho_{\scT^\omega})}
\end{align}
with $\rho_{\scT^\omega} = \tau / \Phi(\tau)$ denoting the radius of convergence of the unique power series $\scT^\omega(x)$ satisfying
\begin{align}
\label{eq:taq2}
\scT^\omega(x) = x \Phi(\scT^\omega(x)).
\end{align}
See \cite[Remark 3.2 and Remark 7.5]{MR2908619} for a justification of these facts. As we shall see in the proof of Lemma~\ref{le:step2}, the random tree $\cT$ is also the distributional limit in \eqref{eq:tvlimit}.
\begin{remark}
\label{rem:tilting}
The preceding text implies that we may without loss of generality assume that $\mathbf{w}$ is a probability weight sequence and $\rho_{\scT^{\omega}} = 1$ whenever we sample objects conditioned on having a certain size. This is due to the fact that, for $a,b>0$, the rescaled sequence $\tilde{\mathbf{w}}:=(ab^k\omega_k)_{k\ge 0}$ yields for any tree $T \in \scT^{\omega}_n$
\[
	\tilde{\omega}(T)
	:= \prod_{v\in T} \tilde{\omega}_{d^+(v)}
	= a^n b^{\sum_{v\in T}d^+(v)} \omega(T)
	= a^n b^{n-1} \omega(T).
\]
Then, both sequences induce the same conditioned probability
\begin{align*}
	\Pr{\tilde{\cT} = T \mid \lvert \tilde{\cT} \rvert = n} 
	= \frac{\tilde{\omega}(T)}
	{\sum_{T'\in \scT^{\tilde{\omega}}} \tilde{\omega}(T) }
	= \frac{\omega(T)}{\sum_{T'\in \tilde{\omega}} \omega(T) }
	= \Pr{\cT = T \mid \lvert \cT \rvert = n}.
\end{align*}
Further, choosing $a=\Phi(\tau)^{-1}$ and $b=\tau$ we obtain
\begin{align*}
	\scT^{\tilde{\omega}}(x)
	&= \sum_{T\in\scT^{\tilde{\omega}}} \tilde{\omega}(T) x^{\lvert T \rvert}
	= \sum_{T\in\scT^{\omega}} \Phi(\tau)^{-\lvert T \rvert }
	\tau^{\lvert T \rvert -1} \omega(T) x^{\lvert T \rvert} \\
	&= \tau^{-1} \sum_{T\in\scT^{\omega}} \rho_{\scT^{\omega}}^{\lvert T\rvert}
	\omega(T) x^{\lvert T \rvert}
	= \frac{\scT^{\omega}(\rho_{\scT^{\omega}} x)}{\scT^{\omega}(\rho_{\scT^{\omega}})}
\end{align*}
and we readily deduce that $\rho_{\scT^{\tilde{\omega}}}=1$.
\end{remark}

\subsection{The geometric approximation (Proof of Lemma \ref{le:step2})}
Our first aim is to prove Lemma \ref{le:step2}. Therefore, we observe that there exists a weight-preserving bijection between trees in $\scT^{\overline{\omega}}$ and ordered pairs of trees in $\scT^{\omega}$, i.e. the random object $\cT_n^{*}$ can be decomposed into $(\cT_{n_1},\cT_{n_2})$ such that $n_1 + n_2 = n$. To see that we simply split up $T\in\scT^{\overline{\omega}}$ into $(T^{(1)},T^{(2)})$, where $T^{(1)}$ is the fringe subtree at the first son of the root of $T$ and $T^{(2)}$ is the remaining pruned tree. Clearly, this guarantees that the roots of $T^{(1)}$ and $T^{(2)}$ receive weights according to their outdegree, by which we obtain trees in $\scT^{\omega}$. (Note that for any non-rooted node $v$ in an arbitrary rooted tree $T$ it holds that $d_T(v)-1=d_T^+(v)$.) On the other hand, given $(T^{(1)},T^{(2)})\in \scT^\omega \cdot \scT^\omega$, connect the root of $T^{(1)}$ to the root of $T^{(2)}$ as the leftmost son to obtain an object in $\scT^{\overline{\omega}}$. By this procedure the neighbourhood of both roots increases by one, leading to the proper weighting. Formally, this identity allows us to interpret $\cT_n^*$ as a
\begin{align}
	\label{eq:relation_weighting_gen_series_planted}
	\scT^\omega \cdot \scT^\omega
	\simeq
	\scX^2 \circ \scT^\omega 
\end{align}
composite object, with $\scX^2$ denoting the unique class consisting of a single ordered pair of atoms that receives weight $1$. Let 
\[
	\scT^\omega(x) := \sum_{T\in\scT^{\omega}} \omega(T) x^{\lvert T \rvert}
\]
be the ordinary generating function of $\scT^{\omega}$, which is known to satisfy the equation
\[
	\scT^\omega(x) = x \Phi(\scT^\omega(x)),
\]
see for example \cite[Sec. 3.1.4]{MR2484382}. Consequently, the assumptions of Lemma \ref{le:partf} are fulfilled and $\scT^\omega(z)/z$ is subexponential with $d = \spa(\mathbf{w})$.\\
The class $\scT^\omega$ is \emph{asymmetric} in the sense that each object only admits the trivial automorphism. Hence, there are exactly $n!$ ways to label objects of size $n$ and it makes no difference whether we draw an object from $\scT_n^\omega$ proportional to its $\omega$-weight or any of the $n!$ labelled counterparts from the set of all labelled objects in $L(\scT)_n^\omega$ proportional to their $\omega$-weights. Further, the ordinary and exponential generating series coincide in this case. Keeping that comment in mind, the assumptions of Lemma \ref{lem:giant_component} are fulfilled. Denote by $\cT_n^{\min}$ and $\cT_n^{\max}$ ``the'' smallest and the largest tree corresponding to $\cT_n^*$. Whenever both trees have the same size, we pick an arbitrary order. Then, Lemma \ref{lem:giant_component} states that upon removing the largest component of $(\cT_n^{\min},\cT_n^{\max})$ (which is $\cT_n^{\max}$ per definition), the remainder converges in total variation to the random unlabelled planted tree $\cT$ given by
\[
	\Pr{\cT = T} = \frac{\omega(T) \rho_{\scT^{\omega}}^{\lvert T\rvert}}{\scT^\omega(\rho_{\scT^{\omega}})},
	\qquad T\in \scT^{\omega}.
\]
By Equation \eqref{eq:taq} we know that $\cT$ follows the distribution of a Galton-Watson tree with offspring distribution given in Equation \eqref{eq:offsp}. This fully proves Lemma \ref{le:step2}.

\subsection{Approximating unlabelled by labelled trees (Proof of Lemma \ref{le:step1})}
Let $\cL_n\sim L(\scU)^{\overline{\omega}}$ denote the random tree drawn proportional to its $\overline{\omega}$-weight from all \emph{labelled} unrooted plane trees of size $n$. The crucial step towards proving Lemma \ref{le:step1} is showing that, for large $n$, the unlabelled object $\cU_n$ can be approximated by drawing $\cL_n$ and dropping the labels afterwards. For any labelled tree $T$, let $\tilde{T}$ be its unlabelled counterpart.
\begin{lemma}
	\label{le:approx_unlab_lab_step1}
	There exist constants $C,c>0$ such that for all $n$
	\[
		d_{\mathrm{TV}}(\cU_n, \tilde{\cL}_n) 
		\le C \exp(-cn).
	\]
\end{lemma}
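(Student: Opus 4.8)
The plan is to compare the uniform-weighted random unlabelled tree $\cU_n$ with the projection $\tilde{\cL}_n$ of the uniform-weighted random \emph{labelled} tree $\cL_n$, and to show that the only discrepancy between the two distributions comes from trees possessing a non-trivial automorphism, which are exponentially rare. Concretely, for a fixed unlabelled tree $\tilde{T}$ of size $n$, Lemma~\ref{le:unlabelled_n!_symmetries_and_ogs_in_cycle_index_sum} says there are exactly $n!$ symmetries $(T,\sigma)$ with $T\in\tilde{T}$, hence the number of distinct labellings of $\tilde{T}$ equals $n!/|\mathrm{Aut}(\tilde{T})|$. Therefore
\[
	\Pr{\tilde{\cL}_n = \tilde{T}}
	= \frac{n!}{|\mathrm{Aut}(\tilde{T})|}\cdot\frac{\overline{\omega}(\tilde{T})}{|L(\scU)^{\overline{\omega}}_n|}
	= \frac{\overline{\omega}(\tilde{T})/|\mathrm{Aut}(\tilde{T})|}{Z_{L(\scU)^{\omega}}(x,x^2,\dots)\text{-type normalisation}},
\]
while $\Pr{\cU_n=\tilde{T}} = \overline{\omega}(\tilde{T})/\sum_{\tilde{T}'}\overline{\omega}(\tilde{T}')$. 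Writing $a_n = \sum_{\tilde T}\overline\omega(\tilde T)$ for the unlabelled count and $b_n = \sum_{\tilde T}\overline\omega(\tilde T)/|\mathrm{Aut}(\tilde T)| = [x^n]\,\scU^{\overline\omega}_{\mathrm{lab}}$-series for the ``labelled'' count (after dividing by $n!$), the total variation distance is controlled by
\[
	d_{\textsc{TV}}(\cU_n,\tilde{\cL}_n)
	\le 2\,\frac{\sum_{\tilde T:\,|\mathrm{Aut}(\tilde T)|>1}\overline\omega(\tilde T)}{a_n},
\]
so it suffices to bound the $\overline\omega$-weight of asymmetric-failing trees relative to $a_n$.

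The key step is then a \emph{generating-function} estimate: the generating series $A(x)=\sum_n a_n x^n$ of all unrooted plane trees and the series $B(x)$ counting those with a non-trivial automorphism should satisfy $\rho_B > \rho_A$, after which Lemma~\ref{le:coeff_comparison_subexp} delivers the exponential bound $b_n'/a_n \le C e^{-cn}$, where $b_n'$ is the weight of the symmetric trees. To get $A(x)$ I would use the dissymmetry-type / cycle-pointing structure of unrooted plane trees: an unrooted plane tree is, up to the choice of a centre (a vertex or an edge), built from the rooted plane-tree series $\scT^\omega(x)$, and via Lemma~\ref{le:unlabelled_n!_symmetries_and_ogs_in_cycle_index_sum} together with \eqref{eq:cycle_index_sum_comp} and \eqref{eq:weighted1} the unlabelled unrooted series is expressible through $Z_{\Cyc^\kappa}$ evaluated at $\scT^\omega(x),\scT^{\omega^2}(x^2),\dots$. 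Since $\scT^\omega(x)/x$ is subexponential with radius $\rho:=\rho_{\scT^\omega}$ (Lemma~\ref{le:partf}), Lemma~\ref{lem:subexp_composition} shows the ``main term'' of the unrooted series — the one coming from the fixpoint variable $s_1$, i.e.\ the part using only $\scT^\omega(x)$ and not the $\scT^{\omega^k}(x^k)$ with $k\ge 2$ — is itself subexponential with the same radius $\rho$ and satisfies the hypothesis $f_n/f_{n+d}\sim\rho^d$ of Lemma~\ref{le:coeff_comparison_subexp}. All contributions involving some $\scT^{\omega^k}(x^k)$ with $k\ge 2$ — which is exactly where non-trivial automorphisms enter — have a strictly larger radius of convergence, namely $\rho^{1/k}\cdot(\text{something})\ge \rho^{1/2}>\rho$ when $\rho<1$ (and after the tilting of Remark~\ref{rem:tilting} we may assume $\rho_{\scT^\omega}=1$, so one argues slightly differently, e.g.\ that $\scT^{\omega^k}(x^k)$ has radius of convergence $1$ but its coefficients are supported on the sparse lattice $k\ndN$, giving an extra exponential gain in the composition). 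These ``symmetric'' contributions therefore play the role of $g(x)$ in Lemma~\ref{le:coeff_comparison_subexp}, yielding $b_n'/a_n \le C e^{-cn}$ and hence the claim.

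\textbf{Main obstacle.} The delicate point is making precise the radius-of-convergence gap between the asymmetric main term and the symmetric correction terms \emph{in the critical/tilted regime} where $\rho_{\scT^\omega}=1$, since there the naive bound ``$\rho^{1/2}>\rho$'' fails. The right fix is to not tilt for this step, or to track the sparsity of the lattice support of $\scB^{\omega^k}(x^k)$: because its non-zero coefficients sit on $k\ndN$ while the dominant singularity of $\scT^\omega$ is of square-root type with coefficients $\sim c\, n^{-3/2}\rho^{-n}$, substituting $\scT^{\omega^k}(x^k)$ into an outer analytic function contributes coefficients that are exponentially smaller by a factor governed by $\rho^{n}$ versus $\rho^{n/k}$ — once one also accounts for the Euler-$\varphi$ weights $\varphi(d)/d$ in \eqref{eq:weighted1}. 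I would isolate this as a short lemma: ``for $k\ge 2$, $[x^n]\,H(\scT^{\omega}(x),\scT^{\omega^2}(x^2),\dots)$ restricted to monomials using at least one $s_k$-slot with $k\ge2$ is $O(e^{-cn})[x^n]\scT^\omega(x)$'', proved by a Hayman/saddle-point or a direct subexponential estimate via Lemma~\ref{lem:subexp_composition} applied to the outer variable. Everything else — the automorphism-counting identity, the reduction of $d_{\textsc{TV}}$ to a weight ratio, and the final application of Lemma~\ref{le:coeff_comparison_subexp} — is routine.
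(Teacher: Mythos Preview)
Your overall strategy matches the paper's: reduce the discrepancy between $\cU_n$ and $\tilde{\cL}_n$ to the contribution of non-trivial automorphisms, express that contribution through cycle-index sums involving $\scT^{\omega^d}(x^d)$ for $d\ge 2$, establish a radius-of-convergence gap, and finish with Lemma~\ref{le:coeff_comparison_subexp}. The paper streamlines the first step by sampling a random \emph{symmetry} $(\cL_n,\sigma_n)$ from $\mathrm{Sym}(L(\scU)^{\overline\omega})$, so that $\cU_n\stackrel{(d)}{=}(\cL_n,\sigma_n)^\sim$ exactly and the total variation distance is bounded by $\Pr{\sigma_n\neq\mathrm{id}}$; it then splits the non-trivial symmetries according to whether the tree is vertex- or edge-centred, obtaining explicit series $\cR_v(x)$ and $\cR_e(x)$.

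Where your proposal has a genuine gap is precisely at the step you flag as the main obstacle. After tilting so that $(\omega_k)_{k\ge0}$ is a probability sequence and $\rho_{\scT^\omega}=1$, you assert that $\scT^{\omega^d}(x^d)$ still has radius $1$ and try to squeeze out an exponential gain either from the lattice sparsity of its support or from a square-root singularity of $\scT^\omega$. Neither works: a radius-$1$ series supported on $d\ndN$ does not have exponentially smaller coefficients than a radius-$1$ series supported on $\ndN$, and a square-root singularity is simply not available under the sole hypothesis $\rho_\Phi>0$ (in the subcritical case $\nu<1$ there is none). The point you are missing --- and this is the crux of the paper's proof --- is that the radius of $\scT^{\omega^d}(x^d)$ is \emph{strictly larger than $1$}, uniformly in $d\ge 2$. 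Since the tilted weights satisfy $\sum_k\omega_k=1$ with at least two positive terms, one has $\|\omega\|_d\le\|\omega\|_2<1$ for every $d\ge2$; the paper then shows
\[
\rho_{\scT^{\omega^d}}^{1/d}\;\ge\;\Phi_d(1)^{-1/d}\;=\;\|\omega\|_d^{-1}\;\ge\;\|\omega\|_2^{-1}\;>\;1.
\]
This uniform lower bound is exactly what allows one to sum over $d$ against the Euler weights $\varphi(d)/d$ in \eqref{eq:weighted1} and conclude that the full ``symmetric'' series $\cR(x)=\cR_v(x)+\cR_e(x)$ has radius strictly exceeding $\rho_{\scT^\omega}$, after which Lemma~\ref{le:coeff_comparison_subexp} delivers the exponential bound. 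Without this $\ell^p$-norm argument your plan does not close.
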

Any labelled unrooted plane tree of size $n$ has exactly $2(n-1)$ corners and by determining one corner as root and ordering the descendants in a canonical way we obtain a labelled planted tree. Further, as mentioned before, planted trees do only allow the trivial automorphism, by which we deduce that there are exactly $n!$ possibilities to obtain an unlabelled planted tree, as depicted in Figure \ref{fi:planted_plane}. All in all, this means that 
\[
	\tilde{\cL}_n
	\overset{(d)}{=} \tilde{\cT}_n^{*}
\]
and Lemma \ref{le:step1} follows readily. 
\begin{figure}
\centering
  \def\svgwidth{0.9\columnwidth}
	\resizebox{0.9\textwidth}{!}{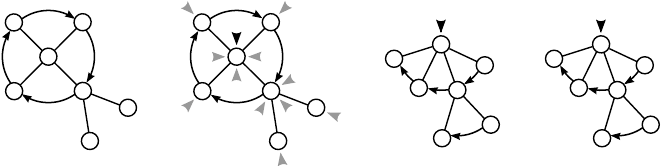}
		\caption{Given a labelled plane tree with $n$ vertices (first), there are $2(n-1)$ possibilities to appoint a corner as root (second). The position of the corner induces a natural ordering of the children of the root (third) and, as there exist no symmetries, we find $n!$ corresponding unlabelled trees (fourth). }
		\label{fi:planted_plane}
\end{figure}
\begin{proof}[Proof of Lemma \ref{le:approx_unlab_lab_step1}]

Let $\mathrm{Sym}(L(\scU)^{\overline{\omega}})$ be the set of all pairs $(L,\sigma)$, where $L$ is a labelled unrooted plane tree and $\sigma$ is an automorphism of $L$, that is $\sigma . L = L$. For any such object we define its $\overline{\omega}$-weight by
\[
	\overline{\omega}((L,\sigma)) 
	:= \overline{\omega}(L).
\]
Further, the (random) pair $(\cL_n,\sigma_n)$ is drawn proportional to its $\overline{\omega}$-weight from all symmetries of size $n$ in $\mathrm{Sym}(L(\scU)^{\overline{\omega}})$. For $(L,\sigma)\in \mathrm{Sym}(L(\scU)^{\overline{\omega}})$ we denote by $(L,\sigma)^{\sim}$ the tree which is obtained by removing the labelling of $L$ and discarding the automorphism. By the previous notation this means $(L,\sigma)^{\sim} := \tilde{L}$. It is a well known fact that every unlabelled structure in $\scU_n$ induces $n!$ symmetries in $\mathrm{Sym}(L(\scU)^{\overline{\omega}})$ in the sense that for any $U\in\scU_n$ there are $n!$ symmetries $(L,\sigma)$ such that $(L,\sigma)^{\sim} = U$, see Lemma \ref{le:unlabelled_n!_symmetries_and_ogs_in_cycle_index_sum}. This implies
\[
	\cU_n
	\overset{(d)}{=} (\cL_n,\sigma_n)^{\sim}.
\]
There is a canonical way of rooting any tree at its geometric center by pruning its leafs until there is only one edge or one node left. We call such trees \emph{edge-centered} and \emph{vertex-centered}, respectively. With this comment in mind, there appear three different scenarios for any $(L,\sigma)$ in $\mathrm{Sym}(L(\scU)^{\overline{\omega}})$:
\begin{enumerate}
\item $\sigma = \mathrm{id}$.
\item $L$ is vertex-centered and $\sigma \neq \mathrm{id}$.
\item $L$ is edge-centered and $\sigma \neq \mathrm{id}$.
\end{enumerate}

Symmetries being part of the first case can equally be seen as regularly labelled objects without an automorphism. \\
Whenever the tree of a symmetry $(L,\sigma)$ is vertex-centered and the automorphism is not trivial, there is a unique way of decomposing $L$, cf. Figure \ref{fi:vertex_cent}. Consider the cyclically ordered list of trees spreading from the center $v$ of $L$ and appoint each of the nodes connected to $v$ by an edge as root of the tree it is contained in. Let us call these planted trees $T_1,\dots,T_k$. As planted trees do only allow the trivial automorphism and $\sigma \neq \mathrm{id}$ each node in $T_1$ needs to be transported into some $T_\ell$ for $\ell\neq 1$ along $\sigma$. Hence, in order to maintain the linear ordering within $T_1$, \emph{all} of its vertices are send to $T_\ell$ and consequently $T_\ell$ is an identical copy of $T_1$. Similarly, all vertices of $T_\ell$ are mapped into either $T_1$ or some other identical tree. Continuing this procedure, we receive $d\ge 2$ identical trees, which interchange their entire set of vertices among each other under $\sigma$. By repeating the previous steps successively we may eventually partition $\{T_1,\dots,T_k\}$ into $j\in[k]$ subsets of sizes $d_1,\dots,d_j$, where each subset contains identical trees interchanging their vertex-sets under $\sigma$ and $d_1 + \cdots + d_j = k$. Recall that the roots of the $T_1,\dots,T_k$ are cyclically ordered around the center $v$, which immediately implies that $d_1 = \cdots = d_j$ and $j= k/d_1$ needs to be a divisor of $k$. Further, the only fixpoint in $\sigma$ sends the center $v$ to itself. \\
On the other hand, given the product of a single vertex $v$ and a cyclically ordered list of planted trees $T_1,\dots,T_k$ together with a non-$\mathrm{id}$ automorphism $\tilde{\sigma}$ (as before), we may reconstruct a vertex-centered tree by connecting $v$ to the roots of the planted trees. This is due to the fact that the longest path of two identical copies attached to $v$ by two edges is always odd. Then, by extending $\tilde{\sigma}$ with a fixpoint at $v$ we obtain a symmetry of the second case. \\
This decomposition guarantees that the set containing all symmetries of the second case, denoted by $\scR_v$, can be expressed as 
\[
	\scR_v 
	= \{(L,\sigma): L\in \scX \cdot ( \Cyc^{\kappa}\circ L(\scT)^{\omega} ) , ~\sigma.L = L,~ \sigma\neq \mathrm{id}\},
\]
where $\kappa(\tau)=\omega_{k-1}$ for $\tau\in\Cyc^{\kappa}_k$ and $\scX$ is the unique class consisting of a single vertex receiving weight 1 accounting for the center $v$.
As described before, we deduce that in this case any automorphism $\sigma$ contains only \emph{one} single fixpoint (sending the center $v$ to itself) as for otherwise \emph{all} cycles in $\sigma$ were fixpoints violating the assumption $\sigma \neq \mathrm{id}$. The cycle index sum $Z_{\Cyc^{\kappa}\circ L(\scT)^{\omega}}(0,x^2,x^3,\dots)$ counts symmetries of $\Cyc^{\kappa}\circ L(\scT)^{\omega}$ without any fixpoints and hence, according to \eqref{eq:gen_fct_product} and \eqref{eq:weighted1},
\[
	\cR_v(x)
	:= xZ_{\Cyc^{\kappa}}(Z_{L(\scT)^{\omega}}(0,x^2,x^3,\dots), Z_{L(\scT)^{\omega^2}}(x^2,x^4,x^6,\dots), \dots)
\]
counts exactly the symmetries in $\scR_v$. Per definition
\[
	Z_{L(\scT)^{\omega}}(0,s_2,s_3,\dots)
	= \sum_{k\ge 0} \frac{1}{k!} 
	\sum \omega(T) s_2^{\sigma_2}\cdots s_k^{\sigma_k},
\]
where the sum is conducted over all ${(T,\sigma)\in \mathrm{Sym}(L(\scT)^{\omega})}$ such that $\sigma$ has no fixpoint. On the other hand, labelled planted trees do only allow the trivial automorphism and thus $Z_{L(\scT)^{\omega}}(0,x^2,x^3,\dots)=0$. This shows
\[
	x^{-1}\cR_v(x)
	= Z_{\Cyc^{\kappa}}(0,\scT^{\omega^2}(x^2),\scT^{\omega^3}(x^3),\dots)
	= \sum_{d\ge 2} \frac{\varphi(d)}{d} \sum_{j\ge 1} \frac{\omega_{jd-1}}{j} 
	\left( \scT^{\omega^d}( x^d) \right)^j.
\]
Note that we used $\widetilde{L(\scT)^{\omega^i}}(x) = \scT^{\omega^i}(x)$ in the latter identity.

\begin{figure}[t]
	\centering
	\centering
	\includegraphics[width=0.7\textwidth]{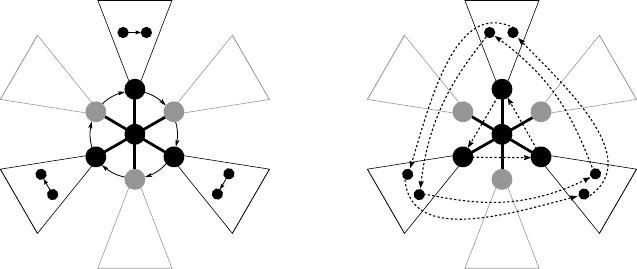}
	\caption{On the left: a vertex-centered plane tree, where solid arrows indicate the ordering of vertices within the tree; on the right: dashed arrows indicate a non-$\mathrm{id}$ permutation which preserves the ordering and sends all vertices of each tree to their respective clones}
	\label{fi:vertex_cent}
\end{figure} 

For the third case consider $(L,\sigma)$ such that $L$ is edge-centered and $\sigma\neq id$. By similar reasoning as for the vertex-centered case, $L$ consists of two copies of the same planted tree (having half the size of $L$) connected by an edge and the automorphism $\sigma$ sends each vertex in one tree to its clone in the other tree, see Figure \ref{fi:edge_cent}. Clearly, the respective generating series is
\[
	\cR_e(x)
	:= \scT^{\omega^2}(x^2).
\]
\begin{figure}[t]
	\centering
		\centering
		\includegraphics[width=0.7\textwidth]{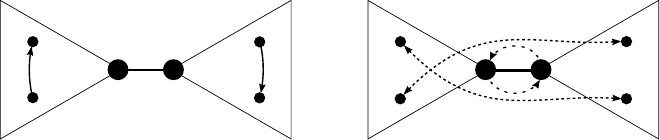}
		\caption{On the left: an edge-centered plane tree, where solid arrows indicate the ordering of vertices within the tree; on the right: dashed arrows indicate a non-$\mathrm{id}$ permutation which preserves the ordering and sends all vertices of each tree to their respective clones}
		\label{fi:edge_cent}
\end{figure}
Concluding we have that
\[
	Z_{\scU^{\overline{\omega}}}(x,x^2,x^3,\dots)
	= L(\scU)^{\overline{\omega}}(x) + \cR_v(x) + \cR_e(x),
\]
where $L(\scU)^{\overline{\omega}}(x)$ is the exponential generating series of labelled plane trees.  \\
Define $\cR(x) = \cR_v(x) + \cR_e(x)$ and assume $n \equiv 2 \mod \spa(\mathbf{w})$. Given the random structure $(L_n,\sigma_n)$ drawn proportional to its $\overline{\omega}$-weight from $\mathrm{Sym}(L(\scU)^{\overline{\omega}})$ we first observe
\[
	\Pr{ \sigma_n \neq \mathrm{id}_n} 
	=  \frac{[x^n] \cR(x)}{[x^n](L(\scU)^{\overline{\omega}}(x)+\cR(x))}
	\le \frac{[x^n] \cR(x)}{[x^n]L(\scU)^{\overline{\omega}}(x)}.
\]
Again we make use of the fact that there is an $n!:2(n-1)$-relation between labelled plane trees and unlabelled planted trees to obtain
\[
	[x^n]L(\scU)^{\overline{\omega}}(x)
	= \frac{1}{2(n-1)} [x^n] \scT^{\overline{\omega}}(x)
	= \frac{1}{2(n-1)} [x^n] \scT^\omega(x)^2,
\]
where the latter identity is due to Equation \eqref{eq:relation_weighting_gen_series_planted}. By Lemma \ref{le:partf} the shifted generating series $x^{-1}\scT^\omega(x)$ is subexponential for $d = \spa(\mathbf{w})$. Hence, Lemma \ref{lem:subexp_composition} gives us the existence of a constant $A>0$ such that
\[
	[x^n] x^{-2} \scT^\omega(x)^2
	\ge A \cdot [x^n]x^{-1}\scT^\omega(x)
\]
for all $n\equiv 0 \mod d$. We conclude
\[
	\frac{[x^n] \cR(x)}{[x^n]L(\scU)^{\overline{\omega}}(x)}
	= 2(n-1) \frac{[x^{n-2}] x^{-2} \cR(x)}{[x^{n-2}] x^{-2}\scT^{\omega}(x)}
	\le A \cdot 2(n-1) \frac{[x^{n-2}] x^{-1}\cR(x)}{[x^{n-2}]x^{-1}\scT^{\omega}(x)}
\]
for all $n\equiv 2 \mod d$. If we assume that the radius of convergence of $x^{-2}\cR(x)$ is strictly greater than the one of $x^{-2}\scT^{\omega}(x)$, Lemma \ref{le:coeff_comparison_subexp} gives us the existence of $C,c>0$ such that, for $n\equiv 2\mod d$,
\[
	\frac{[x^{n-2}] x^{-2}\cR(x)}{[x^{n-2}]x^{-2}\scT^{\omega}(x)}
	\le C \exp(-cn).
\]
Consequently, for some $C',c'>0$,
\[
	\Pr{ \sigma_n \neq \mathrm{id}_n} 
	\le A \cdot 2(n-1)\frac{[x^{n-2}] x^{-2}\cR(x)}{[x^{n-2}]x^{-2}\scT^{\omega}(x)}
	\le C' \exp(-c'n)
\]
and Lemma \ref{le:approx_unlab_lab_step1} is proven. \\
Hence, to finish the proof, it suffices to show that
\begin{align}
	\label{eq:rho_S<max}
	\rho_{\scT^{\omega}} < \rho_{\cR_e} < \rho_{\cR_v}.
\end{align}
Therefore, we follow closely the presentation in \cite[Sec. 7]{MR2908619} and apply the related results given there. Define
\[
	\Phi_d(x)
	:= \sum_{k\ge 0} \omega_k^d x^k
\]
together with its radius of convergence $\rho_d$ for $d\ge 2$. Then the recursive structure of planted trees guarantees $\scT^{\omega^d}(x) = x \Phi_d( \scT^{\omega^d}(x))$. 
Next, let $\Psi$ and $\nu$ be defined as in \eqref{eq:psi_nu}. We recall that there appear two cases: if $\nu \ge 1$, let $\tau$ be the unique number in $[0,\rho]$ such that $\Psi(\tau)=1$, and set $\tau:=\rho$ otherwise. Likewise, we define $\nu_d$ and $\tau_d$ with respect to $\Phi_d$. Alternatively, in \emph{both} cases $\tau$ can be characterized as the unique maximum point of $\Phi(t)$ on the interval $[0,\rho]$ and an equivalent characterization holds for $\tau_d$ as well, cf. \cite[Remark 7.4]{MR2908619}. The radii of convergence of $\scT^{\omega}(x)$ and $\scT^{\omega^d}(x)$ fulfil
\[
	\rho_{\scT^{\omega}} = \frac{\tau}{\Phi(\tau)}
	\quad
	\text{and}
	\quad
	\rho_{\scT^{\omega^d}} = \frac{\tau_d}{\Phi_d(\tau_d)}
\]
as well as
\begin{align}
	\label{eq:tau=T(rho)}
	\tau
	= \scT^{\omega} (\rho_{\scT^{\omega}})
	\quad
	\text{and}
	\quad
	\tau_d
	= \scT^{\omega^d} (\rho_{\scT^{\omega^d}}),
	\quad
	d\ge 2,
\end{align}
as stated in \eqref{eq:taq} and \cite[Remark 7.5]{MR2908619}. By standard tilting arguments, we might assume that $(\omega_k)_{k\ge 0}$ is a probability weight sequence (which implies that $\rho \ge 1$) and $\rho_{\scT^\omega}=1$, see Remark \ref{rem:tilting}. We know that $\rho$ can be computed by
\[
	\rho 
	= 1/ \limsup_{k\to\infty} \omega_k^{1/k}.
\]
Hence,
\[
	\rho_d 
	= 1/ \limsup_{k\to\infty} \omega_k^{d/k}
	= \rho^d
\] 
and $\rho_d \ge 1$ for all $d\ge 2$. Consequently, the choice of $\tau_d$ yields
\[
	\rho_{\scT^{\omega^d}}
	\ge \Phi_d(1)^{-1},
\]
giving us that the radius of convergence of $\scT^{\omega^d}(x^d)$ satisfies
\[
	\rho_{\scT^{\omega^d}}^{1/d}
	\ge \Phi_d(1)^{-1/d}
	= \lVert \omega \rVert_d^{-1}
	\ge \lVert \omega \rVert_2^{-1}
	> 1
	= \rho_{\scT^{\omega}}.
\]
With latter inequality at hand, there exist $\varepsilon>0$ and $0<\lambda<1$ such that
\[
	\rho_{\scT^{\omega}} + \varepsilon
	\le \lambda \rho_{\scT^{\omega^d}}^{1/d}
\]
and consequently with \eqref{eq:tau=T(rho)}
\begin{align*}
	\scT^{\omega^d}((\rho_{\scT^{\omega}} + \varepsilon)^d)
	\le \scT^{\omega^d}( \lambda^d \rho_{\scT^{\omega^d}}) 
	\le \lambda^d \scT^{\omega^d}( \rho_{\scT^{\omega^d}}) 
	= \lambda^d \tau_d
	\le \lambda^d \rho_d
	= \lambda^d \rho^d
\end{align*}
uniformly in $d\ge 2$. Thus,
\begin{align}
	\begin{split}
		\label{eq:rv_d>rho}
		(\rho_{\scT^{\omega}} + \varepsilon)^{-1}\cR_v(\rho_{\scT^{\omega}} + \varepsilon)
		&\le \sum_{d\ge 2} \frac{\varphi(d)}{d} \sum_{j\ge 1} \frac{\omega_{jd-1}}{j} 
		\lambda^{jd}\rho^{jd} \\
		&\le \sum_{m \ge 2} \omega_{m-1} \rho^m \frac{\lambda^m}{m} \sum_{jd=m} \varphi(d) \\
		&\le \sum_{m \ge 2} \omega_{m-1} \rho^m m\lambda^m
		< \infty,
	\end{split}
\end{align}
by which it is proven that $\rho_{\scT^{\omega}} < \rho_{\cR_v}$. Finally, it is straight-forward to understand that $\rho_{\cR_v}<\rho_{\cR_e}$ and the claim \eqref{eq:rho_S<max} follows.
\end{proof}


\section{Proofs of the applications}
\label{sec:profapp}

\subsection{Scaling limits and tail bounds for the diameter}

We will use the following easy observation, which is inspired by a result for the Gromov--Hausdorff metric \cite[Thm. 7.3.25]{MR1835418}.
\begin{lemma}
	\label{le:ghp}
	Let $(X, d_X, \mu_X)$ be a measured compact metric space. Then for all $\beta>0$ it holds that
	\[
	d_{\textsc{GHP}}((X, \mu_X), (\beta X, \mu_X)) \le |1 - \beta| \Di(X)/2.
	\]
\end{lemma}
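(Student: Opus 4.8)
The plan is to realise $(X,d_X)$ and $(\beta X, d_X)=(X,\beta d_X)$ as isometric subspaces of a single compact metric space in which corresponding points lie at distance at most $\eta := |1-\beta|\,\Di(X)/2$, and then to read off bounds on both the Hausdorff distance of the two copies and the Prokhorov distance of the two images of $\mu_X$. This is the measured analogue of the standard fact that two metrics on a fixed set are Gromov--Hausdorff close if their supremum distance is small (compare \cite[Thm. 7.3.25]{MR1835418}). We may assume $\beta\neq 1$ and $\Di(X)>0$, since otherwise both sides of the claimed inequality vanish.

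First I would glue the two rescaled copies of $X$ along the ``identity'' correspondence $R=\{(x,x):x\in X\}$. On the disjoint union $Z:=(X\times\{1\})\sqcup(X\times\{2\})$ put $\hat d((x,1),(y,1)):=d_X(x,y)$, $\hat d((x,2),(y,2)):=\beta\,d_X(x,y)$, and, for mixed pairs,
\[
\hat d\big((x,1),(y,2)\big):=\hat d\big((y,2),(x,1)\big):=\inf_{z\in X}\big(d_X(x,z)+\eta+\beta\,d_X(z,y)\big).
\]
This is the usual gluing attached to a correspondence, the added ``gap'' $\eta$ being exactly half the distortion $\mathrm{dis}(R)=\sup_{x,y\in X}|d_X(x,y)-\beta\,d_X(x,y)|=|1-\beta|\,\Di(X)$. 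The routine-but-essential step is to check that $\hat d$ is a pseudometric on $Z$: the only inequalities that are not immediate are the triangle inequalities for triples with endpoints in both copies, and there the choice $\eta=\tfrac12\mathrm{dis}(R)$ is precisely what makes the estimate close — this is the one point where the finiteness of $\Di(X)$, hence the compactness of $X$, enters. Since each mixed distance is at least $\eta>0$, each copy is clopen in $(Z,\hat d)$, so $Z$ is compact; passing to the metric quotient identifying points at $\hat d$-distance $0$ (which identifies no two distinct points of the same copy, as $d_X$ and $\beta d_X$ are genuine metrics) yields a compact metric space $\widehat Z$ together with isometric embeddings $\iota_1\colon(X,d_X)\hookrightarrow\widehat Z$ and $\iota_2\colon(X,\beta d_X)\hookrightarrow\widehat Z$, along either of which $\mu_X$ may be pushed forward.

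Finally I would estimate the two distances in $\widehat Z$. Taking $z=x$ in the infimum gives $\hat d(\iota_1(x),\iota_2(x))\le\eta$ for every $x\in X$, so each point of $\iota_1(X)$ lies within $\eta$ of $\iota_2(X)$ and conversely, whence $d_{\textsc{H}}(\iota_1(X),\iota_2(X))\le\eta$; and for any Borel set $A\subseteq\widehat Z$ and any $\epsilon>\eta$ the inclusion $\iota_1^{-1}(A)\subseteq\iota_2^{-1}(A^\epsilon)$ gives $(\iota_1)_*\mu_X(A)\le(\iota_2)_*\mu_X(A^\epsilon)+\epsilon$, and symmetrically, so letting $\epsilon\downarrow\eta$ yields $d_{\textsc{P}}((\iota_1)_*\mu_X,(\iota_2)_*\mu_X)\le\eta$. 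Inserting this particular pair of embeddings into the definition of $d_{\textsc{GHP}}$ then gives $d_{\textsc{GHP}}((X,\mu_X),(\beta X,\mu_X))\le\eta=|1-\beta|\,\Di(X)/2$, as claimed. I expect the verification of the triangle inequality for $\hat d$ on mixed triples to be the only genuinely delicate point; everything else is bookkeeping, and under the minimum in the paper's definition of $d_{\textsc{GHP}}$ it would in fact suffice to control just one of $d_{\textsc{H}}$ and $d_{\textsc{P}}$.
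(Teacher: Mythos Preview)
Your proof is correct and follows essentially the same approach as the paper: both glue the two rescaled copies along the identity correspondence with a gap $\eta=|1-\beta|\,\Di(X)/2$ and then read off the Hausdorff and Prokhorov bounds. Your infimum formula $\inf_{z}\big(d_X(x,z)+\eta+\beta d_X(z,y)\big)$ actually evaluates to $\eta+\min(1,\beta)\,d_X(x,y)$, which is exactly the paper's explicit cross-distance $\eta+\min(d_X(x_1,x_2),d_Y(x_1',x_2'))$, so the two constructions give the same metric; the only difference is that you frame it in the standard correspondence-gluing language and are a bit more explicit about where compactness enters.
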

\begin{proof}
	In order to avoid confusion, set $(Y, d_Y) = (X, \beta d_X)$. For each $x \in X$ let $x'$ denote the corresponding point in $Y$.
	Consider the metric on the disjoint union $X \sqcup Y$ that extends the metrics on $X$ and $Y$, and satisfies for all $x_1 \in X$ and $x_2' \in Y$
	\[
	d(x_1,x_2') = |1- \beta| \Di(X)/2 + \min( d_X(x_1, x_2), d_Y(x_1', x_2')).
	\]
	It is elementary to verify that $d$ satisfies the axioms of a metric, and that
	\[
	d_{\textsc{P}}(\mu_X,\mu_Y) = d_{\textsc{H}}(X,Y) = |1- \beta| \Di(X)/2.
	\]
\end{proof}

We are now ready to present the proof for the scaling limit and diameter tail-bounds for the random tree $\cU_n$.

\begin{proof}[Proof of Theorem~\ref{te:scaling}]
	In the finite variance case, the simply generated planted plane tree $\cT_m$ satisfies a scaling limit as in \eqref{eq:ap1} by results of Aldous~\cite[Thm. 23]{MR1207226}. (The case with periodic offspring distributions was  stated in Le Gall {\cite[Thm 6.1]{MR2603061}}.) It satisfies a tail bound as in \eqref{eq:ap2} by results of Addario-Berry, Devroye and Janson~\cite[Thm. 1.2]{MR3077536}.

	To verify the tail bound~\eqref{eq:ap2}, let $C_1,c_1>0$ be constants such that for all $m$ and $x \ge 0$
	\begin{align}
	\label{eq:tmp1}
	\Pr{ \cT_m \ge x} \le C_1 \exp(-c_1 x^2/m).
	\end{align}
	Lemma~\ref{le:step1}  yields that that there are constants $C_2, c_2>0$ such that
	\[
	\Pr{\Di(\cU_n) \ge x} \le C_2 \exp(- c_2 n) + \Pr{\Di(\cT_n^*) \ge x}. 
	\]
	If $x>n$, then the left-hand side of this inequality equals zero. If $x\le n$, then it clearly holds that $\exp(- c_2 n) \le \exp(-c_2 x^2/n)$. Hence we may write
	\begin{align}
	\label{eq:tmp2}
	\Pr{\Di(\cU_n) \ge x} \le C_2 \exp(- c_2 x^2/n) + \Pr{\Di(\cT_n^*) \ge x}. 
	\end{align}
	The diameter of the tree $\cT^*$ may be bounded by twice its height. If $\cT^*$ has large height $\He(\cT^*) \ge x/2$, then it holds that $\He(\cT^{(1)}) \ge x/2 -1$ or  $\He(\cT^{(2)}) \ge x/2 -1$. Both events have the same probability. Setting $\bar{x}=x/2-1$, it follows from Lemma~\ref{le:step2} and Inequality~\eqref{eq:tmp1} that
	\begin{align*}
	\Pr{\Di(\cT_n^*) \ge x} &\le 2 \Pr{\He(\cT_n^{(1)}) \ge \bar{x}} \\
	&= 2 \sum_{0 \le k \le n} \Pr{|\cT_n^{(1)}| = k} \Pr{\He(\cT_k) \ge \bar{x}} \\
	&\le 2 C_1 \sum_{0 \le k \le n} \Pr{|\cT_n^{(1)}| = k} \exp(-c_1 \bar{x}^2/k) \\
	&\le 2 C_1 \exp(-c_1 \bar{x}^2/n).
	\end{align*}
	Hence the bound~\eqref{eq:ap2} follows from Inequality~\eqref{eq:tmp2}. 
	
	We proceed with establishing the scaling limit for the random tree $\cU_n$. For each $m$, let $\mu_m$ denote the uniform measure on the leaves of $\cT_m$. By Theorem~\ref{te:main} it suffices to show convergence for the random tree $\mS_n$ with the uniform measure $\eta_n$ on its leaves. Clearly $K_n = n + O_p(1)$ implies that
	\begin{align}
	\label{eq:yep}
	\left( \frac{\sigma \cT_{K_n}}{2 \sqrt{K_n}}, \mu_{K_n} \right)  \convdis (\cT_{\mathrm{Br}}, \mu_{\mathrm{Br}} )
	\end{align}
	in the Gromov--Hausdorff--Prokhorov sense as $n$ becomes large. Hence it suffices to show that
	\begin{align}
	\label{eq:s1}
	d_{\textsc{GHP}}\left(  \left (\frac{\sigma \cS_n}{2 \sqrt{n}}, \eta_n \right ), \left (\frac{\sigma \cT_{K_n}}{2 \sqrt{n}}, \mu_{K_n} \right ) \right) \convp 0
	\end{align}
	and
	\begin{align}
	\label{eq:s2}
	d_{\textsc{GHP}}\left(  \left (\frac{\sigma \cT_{K_n}}{2 \sqrt{n}}, \mu_{K_n} \right ), \left (\frac{\sigma \cT_{K_n}}{2 \sqrt{K_n}}, \mu_{K_n} \right ) \right) \convp 0.
	\end{align}
	
	We start with \eqref{eq:s1}.
	Note that the Hausdorff distance $d_{\textsc{GH}}(\cS_n, \cT_{K_n})$ between $\cS_n$ and $\cT_{K_n}$ is bounded by $n - K_n = O_p(1)$. Thus,
	\begin{align}
	\label{eq:nub}
	d_{\textsc{H}} \left (\frac{\sigma \cS_n}{2 \sqrt{n}}, \frac{\sigma \cT_{K_n}}{2 \sqrt{n}} \right ) = \frac{\sigma}{2 \sqrt{n}} d_{\textsc{H}}(\cS_n, \cT_{K_n}) =\frac{\sigma}{2 \sqrt{n}} O_p(1) \convdis 0
	\end{align}
	as $n$ becomes large. The Prokhorov distance $d_{\textsc{P}}(\cdot, \cdot)$ is not homogeneous, so we have to argue differently. For each tree $T$ let $L(T)$ denote its set of leaves. If the tree is rooted, then it is custom to never count the root as a leaf, unless its the only vertex of the tree.  If we regard $\eta_n$ and $\mu_{K_n}$ as measures on the set of vertices of the tree $\cS_n$, then for each subset $A \subset \cS_n$
	\[
	\eta_n(A) = |L(\cS_n) \cap A| / |L(\cS_n)| \qquad \text{and} \qquad \mu_{K_n}(A) =  |L(\cT_{K_n}) \cap A| / |L(\cT_{\mK_n})|.
	\]
	The nominator of the first quotient differs from the nominator of the second by at most $n - K_n$, and the same clearly holds for the denominator.
	It is elementary, that consequently the total variational distance of the random measures $\eta_n$ and $\mu_{K_n}$ may be bounded by
	\[
	d_{\textsc{TV}}(\eta_n, \mu_{K_n}) \le 2 (n - K_n) / |L(\cT_{K_n})|.
	\]
	Janson~\cite[Thm. 7.11]{MR2908619} showed that
	\[
	L(\cT_m) / m \convp \Pr{\xi =0} > 0
	\]
	as $m$ becomes large. As $n - K_n = O_p(1)$, it follows that
	\[
	d_{\textsc{P}}\left(  \left (\frac{\sigma \cS_n}{2 \sqrt{n}}, \eta_n \right ), \left(\frac{\sigma \cT_{K_n}}{2 \sqrt{n}}, \mu_{K_n} \right) \right) \le d_{\textsc{TV}}(\eta_n, \mu_{K_n}) \le \frac{2(n - K_n)}{K_n( \Pr{\xi=0} + o_p(1))}.
	\]
	This bound clearly converges in probability to $0$ as $n$ becomes large. Together with \eqref{eq:nub}, the limit \eqref{eq:s1} readily follows. 
	
	In order to verify \eqref{eq:s2}, note that by the limit \eqref{eq:yep} it holds that \[\Di(\cT_{K_n}) / \sqrt{K_n} = O_p(1).\] It follows by Lemma~\ref{le:ghp} that
	\begin{align*}
	d_{\text{GHP}}\left( \left( \frac{\sigma \cT_{K_n}}{2 \sqrt{n}} \mu_{K_n} \right), \left( \frac{\sigma \cT_{K_n}}{2 \sqrt{K_n}}, \mu_{K_n} \right) \right) &\le \frac{\sigma \Di(\cT_{K_n})}{2} \left( \frac{1}{\sqrt{n}} - \frac{1}{\sqrt{K_n}} \right) \\
	&\le O_p(1)\left( \sqrt{\frac{K_n}{n}} -1 \right).
	\end{align*}
	Since $K_n = n +O_p(1)$, this bound clearly converges in probability to $0$ as $n$ becomes large. Thus \eqref{eq:s2} holds.
	
	
	In the infinite variance setting of claim 2., the scaling limit \eqref{eq:ap3} holds for the simply generated planted plane tree $\cT_m$ by results of Duquesne~\cite{MR1964956} (see also Kortchemski~\cite{MR3185928} and Miermont and Haas~\cite{MR3050512}), and the tail bound~\eqref{eq:ap4} by Kortchemski~\cite{2015arXiv150404358K}. This allows us to deduce the tail bound and convergence for $\cU_n$ in an identical way as for the finite variance case. In order to avoid redundancy, we will not make this explicit.
\end{proof}

\subsection{Benjamini-Schramm limits and degree distribution}

\begin{proof}[Proof of Theorem~\ref{te:bslimit}]
	We start with the graph limit. By Theorem~\ref{te:main}, it suffices to show convergence for the random tree $\mS_n$. Let $v_n$ be a uniformly at random drawn vertex of the tree $\mS_n$, and likewise $u_{K_n}$ a uniformly sampled node of $\cT_{K_n}$. Aldous~\cite{MR1102319} showed that
	\[
	(\cT_m, u_m) \convdis \cT^\bullet
	\]
	with respect to the local metric $d_{\textsc{L}}$ as $m$ becomes large. Since $K_n = n + O_p(1)$, it follows that also
	\[
	(\cT_{K_n}, u_{\cK_n}) \convdis \cT^\bullet.
	\]
	Thus it suffices to verify that
	\begin{align}
	\label{eq:ltoshow}
	d_{\textsc{L}}((\cS_n, v_n), (\cT_{K_n}, u_{\cK_n})) \convp 0
	\end{align}
	as $n$ tends to infinity. Since the compliment of the tree $\cT_{K_n}$ in $\cS_n$ has stochastically bounded size, it follows that the random vertex $v_n$ lies in the tree $\cT_{K_n}$ with probability tending to $1$ as $n$ becomes large. Conditioned on belonging to $\cT_{K_n}$, the vertex $v_n$ is distributed like the uniform node $u_{K_n}$. For all $\ell \ge 0$ let $Z_\ell(\cT_m)$ denote the number of vertices with distance $\ell$ from the root in $\cT_m$. Janson~\cite[Thm. 1.13]{MR2245498} showed that for each $r \ge 1$ there is a constant $C>0$ with
	\[
	\Ex{Z_\ell(\cT_m)^r} \le C \ell^r
	\]
	for all $m$ and $\ell$. In particular for each fixed $\ell$ it holds that $U_\ell(\cT_m)$ is stochastically bounded as $m$ becomes large. Consequently, the same holds for $U_\ell(\cT_{K_n})$. Hence the random vertex $u_{K_n}$ lies outside of $U_\ell(\cT_{K_n})$ with probability tending to $1$ as $n$ becomes large. Whenever this is the case, it follows that
	\[
	U_\ell(\cS_n, u_{K_n}) = U_\ell(\cT_{K_n}, u_{K_n}).
	\]
	Consequently, 
	\[
	d_{\textsc{TV}}( U_\ell(\cS_n, v_n), U_\ell(\cT_{K_n}, u_{K_n})) \to 0
	\]
	as $n$ becomes large. As this holds for arbitrarily large fixed $\ell$, the limit~\eqref{eq:ltoshow} follows.
	
	It remains to prove the central limit theorem for the number $N_d$ of nodes with degree $d$ in $\cU_n$. For each $m$, let $\bar{N}_{d}(\cT_m)$ denote the number of nodes with out-degree $d$ in the tree $\cT_m$. Kolchin~\cite[Thm. 2.3.1]{MR865130} showed that 
	\[
	\frac{\bar{N}_d(\cT_m) - \Pr{\xi=d}m}{\sqrt{m}} \convdis \cN(0, \sigma_d^2)
	\]
	as $m$ becomes large for some $\sigma_d >0$. Since $K_n = n + O_p(1)$, it follows that
	\[
	\frac{\bar{N}_d(\cT_{K_n}) - \Pr{\xi=d}n}{\sqrt{n}} = \frac{\bar{N}_d(\cT_{K_n}) - \Pr{\xi=d}K_n}{\sqrt{K_n}} + o_p(1)\convdis \cN(0, \sigma_d^2).
	\]
	As the size of the small tree attached to the root of $\cT_{K_n}$ in $\cS_n$ is bounded, it follows that
	\[
	|N_d(\cU_n) - \bar{N}_{d-1}(\cT_{K_n})| = O_p(1).
	\]
	Consequently,
	\[
	\frac{N_d(\cU_n) - \Pr{\xi=d-1}n}{\sqrt{n}} \convdis \cN(0, \sigma_{d-1}^2).
	\]
\end{proof}

\bibliographystyle{siam}
\bibliography{uplane}

\end{document}